\numberwithin{equation}{section}
\newtheorem{thm}{Theorem}[section]
\newtheorem{lem}[thm]{Lemma}
\newtheorem{pro}[thm]{Proposition}
\newtheorem{cor}[thm]{Corollary}
\newtheorem{conj}[thm]{Conjecture}
\newtheorem{defi}[thm]{Definition}
\newtheorem{rem}[thm]{Remark}
\newcommand{\be}{\begin{equation}}
\newcommand{\ee}{\end{equation}}
\newcommand{\bea}{\begin{eqnarray*}}
\newcommand{\eea}{\end{eqnarray*}}
\newcommand{\eqby}[1]{\mathrel{\stackrel{#1}{=}}}
\newcommand{\leqby}[1]{\mathrel{\stackrel{#1}{\leq}}}
\newcommand{\geqby}[1]{\mathrel{\stackrel{#1}{\geq}}}
\newcommand{\deq}{\mathrel{\mathop:}=}
\newcommand\numberthis{\addtocounter{equation}{1}\tag{\theequation}} 
\newcommand{\biggg}{\bBigg@\thr@@}
\newcommand{\Biggg}{\bBigg@{3.5}}
\newcommand{\Rmnum}[1]{\expandafter\@slowromancap\romannumeral #1@}
\renewcommand\theequation{{\thesection}%
                   .{\arabic{equation}}}
\newcommand{\const}{\mbox{const}}
\newcommand{\beqa}{\begin{eqnarray}}
\newcommand{\eeqa}{\end{eqnarray}}
\newcommand{\E}{{\mathbb E }}
\newcommand{\N}{{\mathbb N}}
\newcommand{\Z}{{\mathbb Z}}
\renewcommand{\P}{{\mathbb P}}
\newcommand{\e}{\varepsilon}
\begin{document}
\title{Critical parameters for loop and Bernoulli percolation}
\author{Peter M\"{u}hlbacher}
\address{\textsc{University of Warwick}}
\email{\url{peter@muehlbacher.me}}
\maketitle
\begin{abstract}
	We consider a class of random loop models (including the random interchange process) that are parametrised by a time parameter $\beta\geq 0$. Intuitively, larger $\beta$ means more randomness. In particular, at $\beta=0$ we start with loops of length $1$ and as $\beta$ crosses a critical value $\beta_c$, infinite loops start to occur almost surely. Our random loop models admit a natural comparison to bond percolation with $p=1-e^{-\beta}$ on the same graph to obtain a lower bound on $\beta_c$. For those graphs of diverging vertex degree where $\beta_c$ and the critical parameter for percolation have been calculated explicitly, that inequality has been found to be an equality. In contrast, we show in this paper that for graphs of \emph{bounded degree} the inequality is strict, i.e. we show existence of an interval of values of $\beta$ where there are no infinite loops, but infinite percolation clusters almost surely.	
\end{abstract}
\section{Introduction}
The loop models considered here are percolation type probabilistic models with intimate connections to the correlation functions of certain quantum spin systems. These connections were first discovered in \cite{Toth93,AN94}. Let $G=(V,E)$ be a graph and $\beta > 0, u \in [0, 1]$ be two parameters. To each edge $e\in E$ is assigned a \emph{time} interval $[0,\beta]$, and an independent Poisson point process $X_e$ with two kinds of outcomes: ``crosses" occur with intensity $u$ and ``double bars" occur with intensity $1-u$. 

Given a realisation $(X_e)_{e\in E}$, we consider the loop passing through a point $(x, t) \in V \times [0, \beta]$ that is defined as follows (see Fig. \ref{fig:loops}). The loop is a closed trajectory with support on $V\times[0, \beta]_\text{per}$ where $[0, \beta]_\text{per}$ is the interval $[0, \beta]$ with periodic boundary conditions, i.e. the torus of length $\beta$. 
Starting at $(x, t)$, move ``up" until meeting the first cross or double bar with endpoint $x$; then jump onto the other endpoint, and continue in the same direction if a cross, in the opposite direction if a double bar; repeat until the trajectory returns to $(x, t)$. A loop is called infinite if it visits infinitely many different vertices at time $0$, and finite otherwise.\footnote{This property is more well-known in the context of random stirring models, i.e. $u=1$, first introduced by Harris \cite{Harris72}) as finite/infinite permutation \emph{cycles}. Here, however, we want to extend it to $u\neq 1$ and avoid technical difficulties encountered when defining permutations of infinite sets. We note furthermore that this unorthodox naming may be justified by the following observation (pointed out already, e.g., in \cite{HH18}): On connected graphs of bounded degree having loops visiting infinitely (or finitely) many vertices \emph{at any time} is characterised by the almost sure presence (or absence), of an ``infinite loop" as defined here.}

For graphs of sufficiently high vertex degree one expects a phase transition in the sense that there is a \emph{critical (loop) parameter} $\beta_c>0$ such that (a) for $\beta<\beta_c$ there are only finite loops and (b) for $\beta>\beta_c$ there are infinite loops almost surely. Resolving (b) is subject of ongoing research: Results have been obtained for the complete graph (\cite{Schramm,Ber10} for $u=1$, \cite{BKLM18} for $u\in[0,1]$), the hypercube (\cite{KMU16} for $u=1$), trees (\cite{Angel03,Ham12,Ham13sharp} for $u=1$, \cite{BU18loops,HH18} for $u\in[0,1]$) and the Hamming graph (\cite{MilSeng16} for $u=1$) and it remains an open problem for $G=\Z^d$ with $d\geq 2$. We can, however, easily show (a) as follows: Loop models possess a natural percolation structure when viewing any edge $e$ with $X_e$ not empty as opened; this occurs independently for all $e\in E$ with probability $1-e^{-\beta}$. We call this the percolation model with the \emph{corresponding parameter}. Since the set of vertices visited by any loop (at any time) must be contained in a single percolation cluster, only finite loops occur when percolation clusters are finite. Choosing the \emph{critical loop parameter for percolation} $\beta_c^\text{per}$ such that $1-e^{-\beta_c^\text{per}}=p_c(G,\text{bond})$, the critical parameter for bond percolation on $G$, we conclude that for $\beta<\beta_c^\text{per}$ all loops visit only finitely many vertices almost surely and hence $\beta_c \geq \beta_c^\text{per}$.

For a number of models with vertices of diverging degree it has been shown that that the above bound is in fact sharp in the sense that $\beta_c=\beta_c^\text{per}$. This is conjectured more generally for any graphs of diverging vertex degree. For $d$-regular trees it has been shown in \cite{Angel03, BU18loops} (for $u=1$ and $u\in[0,1]$) that $\beta_c$ and $\beta_c^\text{per}$ agree to first order in $d^{-1}$ as $d\to\infty$. 
Ultimately we are interested in proving statements like (a) and (b) for ``small dimensions", e.g. $G=\Z^3$, and in this case, as suggested by the asymptotic expansion around $d=\infty$ in \cite{Angel03,BU18loops} and numerics \cite{BBBUe15}, we do \emph{not} expect $\beta_c=\beta_c^\text{per}$. In fact for $u=1$ this follows for $d$-regular trees from \cite{Ham13sharp}. The contribution of this paper is to give a rigorous and robust proof of this statement for connected, countably infinite graphs $G$ of uniformly bounded degree and $u\in(0,1]$, thus extending one particular implication of \cite{Ham13sharp} to more general graphs and $u$. This leaves out the case $u=0$. We discuss in Subsection \ref{subsec:u=0} why this is hard and why new results on dependent percolation might be needed.

\begin{figure}[hbt]
  \begin{tikzpicture}[thick, scale=1.6]
    \fill[fill=gray!10] (-.5,-.2) -- (3,-.2) node[right] {$G$} -- (3.5,.2) -- (0,.2) 
      -- cycle; 
    \draw[line width=0pt, fill=yellow!30] (0,0) -- (3,0) -- (3,3) -- (0,3) node[left] {$\beta$} -- cycle; 
    
    \draw[color=black, style=thick] (0,1.85) -- (1,1.85);
    \draw[color=black, style=thick] (0,1.75) -- (1,1.75);
    \draw[color=black, style=thick] (1,.75) -- (2,.75);
    \draw[color=black, style=thick] (1,.85) -- (2,.85);
    \draw[color=black, style=thick] (1,2.4) -- (2,2.55);
    \draw[color=black, style=thick] (1,2.55) -- (2,2.4);
    \draw[color=black, style=thick] (2,1.55) -- (3,1.55);
    \draw[color=black, style=thick] (2,1.45) -- (3,1.45);
    \draw[color=black, style=thick] (2,.4) -- (3,.55);
    \draw[color=black, style=thick] (2,.55) -- (3,.4);

    \draw[color=red, line width=.5mm] (0,0) -- (0,1.75);
    \draw[color=red, line width=.5mm] (0,1.85) -- (0,3);
    \draw[color=red, line width=.5mm] (1,.85) -- (1,1.75);
    \draw[color=red, line width=.5mm] (1,1.85) -- (1,2.4);
    \draw[color=red, line width=.5mm] (2,0) -- (2,.4);
    \draw[color=red, line width=.5mm] (2,.85) -- (2,1.45);
    \draw[color=red, line width=.5mm] (2,2.55) -- (2,3);
    \draw[color=red, line width=.5mm] (3,.55) -- (3,1.45);
  
    \draw[color=blue, line width=.5mm] (1,0) -- (1,.75);
    \draw[color=blue, line width=.5mm] (1,2.55) -- (1,3);
    \draw[color=blue, line width=.5mm] (2,.55) -- (2,.75);
    \draw[color=blue, line width=.5mm] (2,1.55) -- (2,2.4);
    \draw[color=blue, line width=.5mm] (3,0) -- (3,.4);
    \draw[color=blue, line width=.5mm] (3,1.55) -- (3,3);
    
    \foreach \x in {0,...,3} 
      \draw[black,fill=white] (\x,0) circle (0.07);
  \end{tikzpicture}
  \begin{tikzpicture}[thick, scale=1.6, baseline=-9.2ex]
    \fill[fill=gray!10] (-1.3,-.8) -- (.5,-.8) -- (1.2,-.2) -- (2,-.2) node[right] {$G$} -- (2.5,.2) -- (0,.2) 
      -- cycle; 
    \draw[line width=0pt, fill=yellow!30] (-.7,-.6) -- (.3,-.6) -- (1,0) -- (2,0) -- (2,3) -- (0,3) -- (-.7,2.4) node[left] {$\beta$} 
      -- cycle; 
        
    \draw[color=gray, style=dotted] (-.7,-.6) -- (0,0) -- (1,0);
    \draw[color=gray, style=thin] (-.7,2.4) -- (.3,2.4) -- (1,3); 
    
    \draw[color=black, style=thick] (0,.55) -- (1,.7);
    \draw[color=black, style=thick] (0,.7) -- (1,.55);
    
    \draw[color=black, style=thick] (1,2.3) -- (2,2.15);
    \draw[color=black, style=thick] (1,2.15) -- (2,2.3);
    
    \draw[color=black, style=thick] (0,2.15) -- (-.7,1.4);
    \draw[color=black, style=thick] (0,2) -- (-.7,1.55);

    \draw[color=black, style=thick] (-.7,.85) -- (.3,.85);
    \draw[color=black, style=thick] (-.7,.95) -- (.3,.95);

    \draw[color=black, style=thick] (1,1.8) -- (.3,1.2);
    \draw[color=black, style=thick] (1,1.9) -- (.3,1.3);

    \draw[color=red, line width=.5mm] (0,0) -- (0,.55);
    \draw[color=blue, line width=.5mm] (0,.7) -- (0,2);
    \draw[color=red, line width=.5mm] (0,2.15) -- (0,3);

    \draw[color=blue, line width=.5mm] (1,0) -- (1,.55);
    \draw[color=red, line width=.5mm] (1,.7) -- (1,1.8);
    \draw[color=blue, line width=.5mm] (1,1.9) -- (1,2.15);
    \draw[color=blue, line width=.5mm] (1,2.3) -- (1,3);

    \draw[color=blue, line width=.5mm] (2,0) -- (2,2.15);
    \draw[color=blue, line width=.5mm] (2,2.3) -- (2,3);
    
    \draw[color=blue, line width=.5mm] (-.7,-.6) -- (-.7,.85);
    \draw[color=blue, line width=.5mm] (.3,-.6) -- (.3,.85);
    \draw[color=blue, line width=.5mm] (-.7,1.55) -- (-.7,2.4);
    \draw[color=blue, line width=.5mm] (.3,1.3) -- (.3,2.4);
    \draw[color=red, line width=.5mm] (-.7,.95) -- (-.7,1.4);
    \draw[color=red, line width=.5mm] (.3,.95) -- (.3,1.2);

    \foreach \x in {0,...,2} 
      \draw[black,fill=white] (\x,0) circle (0.07);
    \foreach \x in {-.7,.3}
      \draw[black,fill=white] (\x,-.6) circle (0.07);

  \end{tikzpicture} 

  \caption{Graphs and realizations of Poisson point processes, and their loops. In both cases, there are exactly two loops, one in red and one in blue.}\label{fig:loops}
\end{figure}
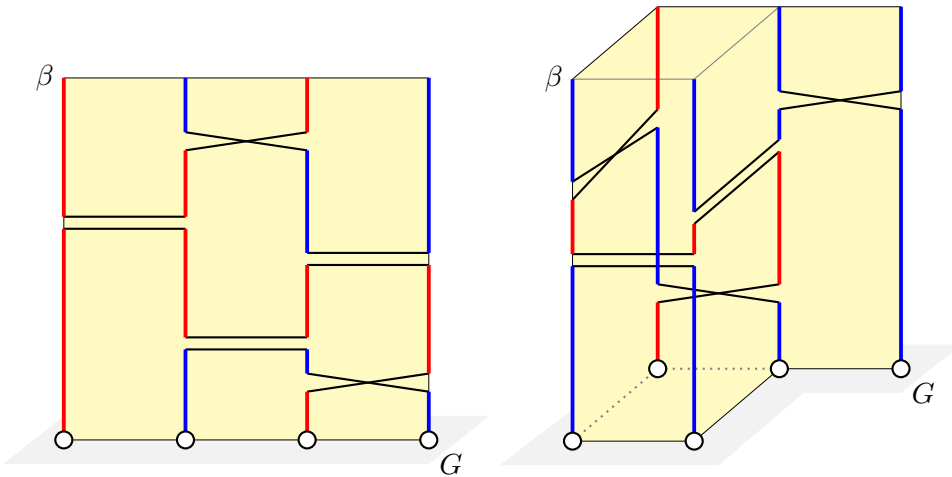

In Section \ref{sec:setting} we introduce some more notation and state our main result, Theorem \ref{thm:maincor}, rigorously. In Section \ref{sec:proof} we prove our main result. In Section \ref{sec:outlook} we discuss briefly some natural generalisations and what is expected to happen when certain assumptions of Theorem \ref{thm:maincor} are dropped. This includes a short proof of an analogous result for some expander graphs, as well as discussions how straightforward generalisations to other physically relevant models and parameter regimes are not covered by our methods or expected to fail.

\section{Setting and Result}\label{sec:setting}
Consider any countably infinite, connected, undirected graph $G=(V,E)$ of uniformly bounded degree, i.e. $\text{deg}(v)\leq\Delta$ for some $\Delta\in\N$ and all $v\in V$. From now on all graphs will be considered to be undirected, at most countably infinite and of uniformly bounded degree. In our notation we will sometimes suppress dependency on $\Delta$, which we think of as fixed from now on. We consider bond percolation on $G$. More precisely we take $\Omega\deq\{0,1\}^E$ and equip it with the $\sigma$-algebra generated by finite cylinder sets, as well as the (Bernoulli) product measure with parameter $p$: $\pi_p\deq\otimes_{e\in E}\nu_e$, where $\nu_e(1)=p=1-\nu_e(0)$, to obtain a probability space. Define its \emph{critical parameter}\footnote{In the context of sufficiently nice amenable graphs the critical value for bond percolation can also be found to be defined as the infimum over $p$ such that the expected value of $|C(v)|$ is infinite. But while these definitions agree e.g. on $\Z^d$, they may not be equivalent in general.} $p_c$ and its \emph{critical loop parameter for percolation} $\beta_c^\text{per}$ by 
\be
	p_c = p_c(G,\text{bond})\deq \inf\{p:\pi_p(|C(v)|=\infty)>0\}\quad\text{and}\quad \beta_c^\text{per}\deq -\ln(1-p_c),
\ee
where $C(v)\subseteq V$ is the set of vertices that are connected to $v$ via a path of open edges. 
Since $G$ is connected, a standard argument shows that the definition is indeed independent of the choice of $v$.

Denote the joint law of the Poisson point processes $(X_e)_{e\in E}=:X$ (henceforth also referred to as \emph{configuration}) as given in the introduction by $\P_{\beta,u}$ and let the random variable $L(v)$ be the set of vertices that the loop starting at $(v,0)\in V\times[0,\beta]$ visits at time $0$, i.e. 
\be
	L(v)=L(v)(X)\deq \{w\in V: (v,0)\leftrightarrow (w,0)\}.
\ee
Here $\{(v,t)\leftrightarrow(w,t')\}$ is the event that the two space-time points $(v,t)$ and $(w,t')$ are traversed by the same loop.
Similarly to the percolation case one can easily show, see e.g. \cite[Proposition 5]{Angel03}, that 
\be
	\beta_c=\beta_c(u)\deq \inf\{\beta:\P_{\beta,u}(|L(v)|=\infty)>0\}
\ee
is well-defined, i.e. it does not depend on the choice of $v$.

We can now state the main theorem:
\begin{thm}\label{thm:maincor}
	For all countably infinite, connected graphs $G$ of uniformly bounded degree with $p_c(G,\text{bond})<1$\footnote{This condition is a mere technicality, needed to exclude boring graphs like $G=\Z$. Note that $p_c=1$ implies $\beta_c^\text{per}=\infty$ and by the observation in the introduction ($\beta_c(u)\geq\beta_c^\text{per}$) we get the rather uninteresting equality ``$\infty=\infty$" in these cases.} and all $u\in(0,1]$ we have $\beta_c(u)>\beta_c^{per}$.
\end{thm}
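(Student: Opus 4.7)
The plan is to show $\E|L(v)|<\infty$ for $\beta$ in some nontrivial interval $(\beta_c^{\text{per}},\beta_c^{\text{per}}+\delta)$. By Markov's inequality this yields $|L(v)|<\infty$ almost surely and hence $\beta_c(u)>\beta_c^{\text{per}}$. By Fubini,
\be
	\E|L(v)|=\sum_{w\in V}\P_{\beta,u}\bigl((v,0)\leftrightarrow(w,0)\bigr),
\ee
so it suffices to establish summable decay of the loop two-point function.

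The naive bound $\P_{\beta,u}((v,0)\leftrightarrow(w,0))\le\pi_{1-e^{-\beta}}(v\leftrightarrow w)$ is useless above $\beta_c^{\text{per}}$ since the right-hand side is already non-summable there. The main idea is to dominate the loop two-point function by a \emph{subcritical} bond-percolation two-point function at some effective parameter $p^{\ast}(\beta)$ satisfying $p^{\ast}(\beta_c^{\text{per}})<p_c$; continuity then provides a right-neighbourhood of $\beta_c^{\text{per}}$ on which $p^{\ast}<p_c$, yielding the required summability. The improvement over the naive bound comes from the observation that mere openness of an edge is necessary but not sufficient for the loop to traverse it: the loop also requires the mark on that edge to appear at a time compatible with its trajectory. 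Concretely, once the loop enters a vertex $w$, its next jump uses the \emph{first} mark it encounters among the $\deg(w)\le\Delta$ incident edges, so any prescribed continuation has conditional probability at most $1/\deg(w)\le 1/\Delta$. This yields a combinatorial gain of order $1/\Delta$ per newly visited vertex that the percolation bound does not capture, and the strict inequality $p^{\ast}<1-e^{-\beta}$ should persist uniformly on a right-neighbourhood of $\beta_c^{\text{per}}$.

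The central difficulty is that loops retrace edges and revisit vertices, so the ``$1/\Delta$ per step'' heuristic applies only to a \emph{backbone} of first-visits and not to the raw trajectory length. To make this rigorous I would associate to each $w\in L(v)$ a canonical self-avoiding backbone of marks (the edges used by a minimal sub-trajectory first reaching $(w,0)$), sum over choices of backbone, and decouple the remaining retraced portions via a BK-type disjoint-occurrence estimate. The resulting expression should be dominated by a subcritical percolation two-point function at $p^{\ast}$, and hence summable. Uniformity over $u\in(0,1]$ is compatible with this scheme: both crosses and double bars consume a fresh mark per newly visited vertex, and direction reversal at double bars does not affect the marginal combinatorics of which adjacent edge carries the next mark. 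The hardest step is carrying out the backbone/hair decomposition cleanly enough to extract a genuine subcritical percolation bound rather than merely the trivial one; by contrast, the exclusion of $u=0$ is substantive, since the rigid geometry forced by all-double-bar configurations correlates distinct edges in a way incompatible with any per-edge decoupling of this kind, matching the paper's remark that new dependent-percolation techniques would be required there.
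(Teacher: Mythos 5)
Your core step—the backbone decomposition with a ``$1/\deg(w)$ per newly visited vertex'' gain, decoupled by a BK-type estimate—is not substantiated, and I do not see how to make it work. The event $(v,0)\leftrightarrow(w,0)$ is a union over many trajectories that may reach $w$ from either direction, at arbitrary times, and after revisiting vertices and edges; it is not a single prescribed continuation, and once you condition on the trajectory so far, the Poisson marks on the incident edges are biased, so successive ``first mark is on the right edge'' events are neither independent nor uniformly bounded by $1/\Delta$. There is also no off-the-shelf BK inequality for this space--time model: the configuration space is not a product of $\{0,1\}$-valued coordinates and loop connectivity is not monotone (adding a cross can destroy a connection), so the proposed decoupling of ``hairs'' from the backbone has no tool behind it. Moreover, if a factor $1/\Delta$ per fresh vertex could simply be multiplied against the percolation path weight, the sum over self-avoiding paths would be at most $\sum_k \Delta(\Delta-1)^{k-1}(p/\Delta)^k<\infty$ for \emph{every} $\beta$, i.e. $\beta_c=\infty$ for all bounded-degree graphs, which is false (e.g. on regular trees); so whatever gain exists cannot compound in the simple way you rely on. Making a small local gain beat the percolation bound \emph{uniformly under conditioning} is precisely the crux of the problem. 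The paper's mechanism is different: it identifies a local cancellation event (an edge carrying exactly two crosses with no neighbouring link at an intermediate height is never traversed by a loop, so loops live on the ``blue'' clusters), and the entire technical content is Proposition \ref{pro:main}, showing via the Liggett--Schonmann--Stacey criterion, a spatial Markov property and a pivotality analysis that these ``red'' edges dominate a Bernoulli product measure uniformly over arbitrary conditioning; hence the blue edges are dominated by Bernoulli percolation at parameter $p(1-\delta)$. Your sketch contains no substitute for this uniform conditional estimate—this is exactly the independence issue the paper flags when explaining why \cite{AG91}-type enhancement arguments do not apply.

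There is a second gap specific to your route through $\E|L(v)|$: even granting a domination of the loop two-point function by a percolation two-point function at some $p^{\ast}<p_c$, summability requires $\E|C(v)|<\infty$ in the subcritical phase, i.e. sharpness of the percolation transition. For the general countably infinite bounded-degree graphs of Theorem \ref{thm:maincor} this is not available—the paper's footnote explicitly warns that the critical point defined via $\P(|C(v)|=\infty)>0$ and the one defined via $\E|C(v)|=\infty$ need not coincide—so your argument would at best cover transitive or otherwise ``nice'' graphs. The paper avoids this entirely by working at the level of a coupling of edge processes ($L(v)\subseteq C_B(v)$ with $B$ stochastically dominated by Bernoulli$(p(1-\delta))$), so that only $\P(|C(v)|=\infty)=0$ strictly below $p_c$ is needed, never finiteness of expected cluster sizes.
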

The result can easily seen to hold for graphs of unbounded degree as long as their critical percolation parameter coincides with that of an induced subgraph of uniformly bounded degree.\;\todo{think about applicability to Galton-Watson trees}

To put it in the language of percolation theory, this implies that there is an interval of $\beta$ for which infinite percolation clusters for (the naturally coupled) bond percolation, but no infinite loops occur almost surely. More precisely, we obtain (as a special case) the following:
\begin{cor}
Consider $G=\Z^d, d\geq 2, u\in(0,1]$. There exist $0<\beta_1<\beta_2<\infty$ (depending only on $d, u$) such that for all $\beta\in (\beta_1,\beta_2)$ we have constants $a,b>0$ depending only on $d$ such that:
\begin{itemize}
	\item $\P_{\beta,u}(|L(0)|=k)\leq a e^{-bk}$ for all $k\in\N$ (small loops),
	\item $\pi_p(|C(0)|=\infty)>0$ for $p=1-e^{-\beta}$ (infinite percolation cluster).
\end{itemize}
\end{cor}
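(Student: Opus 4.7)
The plan is to derive the Corollary directly from Theorem \ref{thm:maincor}, combined with monotonicity of the naturally coupled percolation model. First, $\Z^d$ is a countably infinite, connected graph with uniformly bounded vertex degree $\Delta=2d$, and for every $d\geq 2$ one has $p_c(\Z^d,\mathrm{bond})\in(0,1)$. Hence Theorem \ref{thm:maincor} applies and yields $\beta_c(u)>\beta_c^{\mathrm{per}}$, so the open interval $(\beta_c^{\mathrm{per}},\beta_c(u))$ is non-empty. I would then fix any two numbers $\beta_1<\beta_2$ inside it.

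For the second bullet, note that $\beta\mapsto 1-e^{-\beta}$ is strictly increasing and $p_c=1-e^{-\beta_c^{\mathrm{per}}}$, so any $\beta\in(\beta_1,\beta_2)$ gives $p=1-e^{-\beta}>p_c$. The definition of $p_c$, together with the standard fact that on $\Z^d$ the probability $\pi_p(|C(0)|=\infty)$ is positive for all $p>p_c$, yields the claimed bound immediately.

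For the first bullet, observe that Theorem \ref{thm:maincor} per se only delivers $\P_{\beta,u}(|L(0)|=\infty)=0$ for $\beta<\beta_c(u)$, which is strictly weaker than an exponential tail bound. I would upgrade this by inspecting the proof of Theorem \ref{thm:maincor}: if, as is typical for such results on uniformly-bounded-degree graphs, that proof proceeds via a Peierls-type contour argument or a block-coarse-graining/renormalisation scheme, it produces — for every $\beta$ bounded away from $\beta_c(u)$ — a geometric decay rate for the probability that the loop through the origin reaches graph distance $k$. Combining this with the bounded-degree ball estimate $|B(0,k)|\leq (2d)^{k+1}$ and a union bound, the distance estimate converts into the vertex-count estimate $\P_{\beta,u}(|L(0)|\geq k)\leq a\, e^{-bk}$ with constants depending only on $d$, valid uniformly in $\beta\in[\beta_1,\beta_2]$.

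The main obstacle is precisely this last step: obtaining an exponential tail on an interval of $\beta$ lying entirely above $\beta_c^{\mathrm{per}}$. The obvious stochastic domination $L(0)\subseteq C(0)$ is useless here, because $p>p_c$ places us in the supercritical phase of Bernoulli percolation, where $|C(0)|$ is infinite with positive probability and certainly admits no exponential tail. The exponential decay of $|L(0)|$ must therefore come from finer information about how loops sit inside large percolation clusters, which is exactly the content that the proof of Theorem \ref{thm:maincor} is expected to supply; the Corollary then follows essentially by re-reading that proof with quantitative bookkeeping.
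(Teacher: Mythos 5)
Your treatment of the second bullet is fine, but the first bullet is where the substance lies and there you have a genuine gap. You correctly observe that Theorem \ref{thm:maincor} alone only gives absence of infinite loops, and that the crude inclusion $L(0)\subseteq C(0)$ is useless in the supercritical regime; but your proposed fix --- ``inspect the proof, which presumably runs via a Peierls contour or coarse-graining argument, and extract a geometric decay rate'' --- is speculation rather than an argument, and it guesses the wrong mechanism. The paper's proof contains no renormalisation or contour estimate for the loop model. Its actual content is the red/blue edge decomposition: loops satisfy the refined inclusion $L(v)\subseteq C_B(v)$ of \eqref{eq:inclusion}, and Proposition \ref{pro:main} shows that the red edges dominate an independent Bernoulli process of parameter $\delta=\delta(\beta,\Delta)>0$ on the coloured edges, so the blue process $B=S(1-R)$ is stochastically dominated by Bernoulli bond percolation with parameter $(1-e^{-\beta})(1-\delta)$. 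Choosing $\beta_1=\beta_c^{\mathrm{per}}$ and $\beta_2>\beta_1$ small enough that $(1-e^{-\beta})(1-\delta)<p_c(\Z^d)$ while $1-e^{-\beta}>p_c(\Z^d)$ on $(\beta_1,\beta_2)$, the blue cluster of the origin is dominated by a \emph{subcritical} Bernoulli cluster, and the exponential tail for $|L(0)|$ then follows from the standard exponential decay of subcritical cluster sizes on $\Z^d$ (Menshikov, Aizenman--Barsky), with constants depending only on $d$ and the chosen interval. This is the step you needed and did not supply.

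A second, related overreach: you fix $\beta_1<\beta_2$ as \emph{arbitrary} points of $(\beta_c^{\mathrm{per}},\beta_c(u))$ and claim the exponential tail uniformly there. That would amount to a sharpness statement for the loop model (exponential decay of $|L(0)|$ throughout the whole subcritical loop phase), which is not proved in the paper and is not known at this generality. The corollary's interval $(\beta_1,\beta_2)$ is not the full window below $\beta_c(u)$; it is the (possibly smaller) window produced by the domination argument, namely where the coupled percolation is supercritical but the dominating parameter $(1-e^{-\beta})(1-\delta)$ for the blue edges is still subcritical. Rewriting your proof along these lines --- quoting \eqref{eq:inclusion}, Proposition \ref{pro:main}, and subcritical exponential decay --- closes the gap without any re-reading of the proof of Theorem \ref{thm:maincor} beyond what is already stated.
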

We stress once more that this result is not expected to be true for graphs of diverging degree; see Conjecture \ref{conj:infinitecase} for a rigorous statement. For $u=1$ it has already been ruled out for the complete graph \cite{Schramm}, the hypercube \cite{KMU16}, and the Hamming graph \cite{MilSeng16}.
\section{Proof of Theorem \ref{thm:maincor}}\label{sec:proof}

\subsection{Intuition}
Intuitively Theorem \ref{thm:maincor} tells us that for $\beta=\beta^\text{per}_c$ and all $v\in V$ we have that $L(v)$ is significantly smaller than the cluster $C(v)$ of the naturally coupled percolation process, i.e. bond percolation with $p=1-e^{-\beta}$. This is due to cancellations. Consider, for example, the empty configuration. Adding one cross at some edge $e=\{v,w\}$ will result in $\{(v,0)\leftrightarrow (w,0)\}$ (the time interval is periodic and there are no other links), but after adding a second cross on the same edge that connectivity is lost again.
The main idea of this proof is to identify and ``subtract" a local configuration that increases the likelihood of Bernoulli percolation without making the loop larger and show that it occurs sufficiently often in a sufficiently independent manner. The main challenge is the ``sufficiently independent" part, since it is not enough to show that such a local configuration occurs on a positive fraction of edges in the graph: Consider bond percolation on $\Z^2$, barely above criticality, and let us close a positive fraction of open edges. Can we conclude that there is no infinite cluster of open edges anymore? We want to draw attention to \cite{AG91} (and corrections in \cite{BBR14}) to emphasise that this is not a trivial question. (Despite being very general, \cite{AG91} does not seem to cover our case due to a lack of independence of our ``enhancements".) So unless the edges to be closed are sampled from another (independent) Bernoulli percolation measure, the answer is far from obvious. If they are, however, then it is easy to see that we just obtain another percolation process with strictly smaller parameter $p'$. This will be our goal.

\subsection{Preliminary reductions}
To make that intuition precise we need to introduce some more notation.
For all $e\in E$ and $a<b$ let $N_e(a,b]$ be the number of \emph{links} (a link is a cross or a double bar) of the configuration $X_e$ in the interval of heights $(a,b]$. For all $\beta>0$ let $n_e\deq N_e(0,\beta]$ be the total number of links on $e$ and for all edges $e$ with $n_e=2$ let $0\leq a_e<b_e\leq\beta$ denote the heights of its two links. 

Now we couple $(X_e)_{e\in E}$ to a Bernoulli bond percolation process $S=(S_e)_{e\in E}$ by colouring each edge $e\in E$ either red, blue, or uncoloured: 
\begin{enumerate}
	\item \emph{Red} ($R_e=1$) if $e$ has exactly two links ($n_e=2$), both of which are crosses and such that $N_{\tilde e}(a_e,b_e]=0$ for all $\tilde e\sim e$. 
	\item \emph{Blue} ($B_e=1$) if $e$ has at least one link ($n_e\geq 1$) and $e$ is not red. 
	\item \emph{Uncoloured}/closed ($S_e=0$) if $e$ is neither red, nor blue and\\ \emph{coloured}/open ($S_e=1$) if $e$ is red or blue (or, equivalently, if $n_e \geq 1$).
\end{enumerate}
Introduce the shorthands $R=(R_e)_{e\in E}$, $B=(B_e)_{e\in E}$, $S=(S_e)_{e\in E}$ and note that $S=B+R$. Thus $S$ is a Bernoulli bond percolation process with parameter $1-e^{-\beta}$.
It is easy to see that cycles must be subsets of percolation clusters, i.e. for any $v\in V$ we have $L(v)\subseteq C(v)$, where $C(v)$ is the vertex set of the connected component containing $v$ of the subgraph $G'$ obtained by removing all edges $e$ with $S_e=0$.
This percolation bound is too generous. Since transpositions are involutions and using commutativity of transpositions not involving the same vertices, it is again easy to see that cycles must in fact be subsets of \emph{blue} clusters, i.e.
\be\label{eq:inclusion}
	L(v)\subseteq C_B(v) (\subseteq C(v)),
\ee
where $C_B(v)$ is the set of vertices connected only by blue edges.
Now note that $\beta_c^\text{per}<\beta_c$ is equivalent to the existence of a $\beta$ such that 
\begin{itemize}
	\item[(i)] there is an infinite percolation (i.e. coloured) cluster with positive probability, but
	\item[(ii)] there is no infinite cycle a.s.
\end{itemize}
By \eqref{eq:inclusion}, (ii) follows from
\begin{itemize}
	\item[(ii')] there is no infinite blue cluster a.s.
\end{itemize}
To this end we want many edges to be red so that if we choose $\beta$ barely above criticality (for percolation) and remove all red edges, this would split up all infinite percolation clusters into finite (blue) ones.

Note that $B_e=S_e(1-R_e)$. So if $S$ and $R$ were independent Bernoulli bond percolation processes with parameters $p,p_R\in(0,1)$, respectively, then $B$ would be a Bernoulli bond percolation process with parameter $p (1-p_R)<p$. Hence choosing $p=p_c(G)+\e$ with $\e$ sufficiently small would give us a corresponding loop parameter $\beta = -\ln(1-p)$ for which there is an infinite coloured cluster with positive probability, but no infinite blue cluster a.s. 

Two problems arise: $R_e$ is not independent of $R_{e'}$, i.e. the law of $R$ is not a product measure. But it clearly suffices to show that it still dominates a non-degenerate product measure. This is the hard part and will be done in Proposition \ref{pro:main}. The other problem is that $R$ and $S$ are not independent. This can easily be overcome by noting that it suffices to consider $R$ restricted to coloured edges $E'=\{e\in E:S_e=1\}$ instead and showing that $R|_{E'}$ dominates a non-degenerate product measure on $E'$. 

Note furthermore that without loss of generality we might assume $u=1$, since considering $u\in(0,1)$ merely decreases the intensity of red edges by a factor of $u^2>0$. It will not matter if the other links are crosses or double bars, so henceforth we will suppress $u$ in the notation and talk about crosses, not links.

\begin{figure}[hbt]
\begin{tikzpicture}[thick, scale=1.6]
    \fill[fill=gray!10] (-.5,-.2) -- (3,-.2) node[right] {$G$} -- (3.5,.2) -- (0,.2) 
      -- cycle; 
    \draw[line width=0pt, fill=yellow!30] (0,0) -- (3,0) -- (3,3) -- (0,3) node[left] {$\beta$} -- cycle; 
    
    \draw[color=red, line width=.5mm] (0,0) -- (1,0);
    \draw[color=blue, line width=.5mm] (1,0) -- (3,0);

    \draw[color=black, style=thick] (0,.2) -- (1,.35);
    \draw[color=black, style=thick] (0,.35) node[left] {$a_{e_1}$} -- (1,.2);
    \draw[color=black, style=thick] (0,.6) -- (1,.75);
    \draw[color=black, style=thick] (0,.75) node[left] {$b_{e_1}$} -- (1,.6);
    \draw[color=black, style=thick] (1,1) -- (2,1);
    \draw[color=black, style=thick] (1,1.1) -- (2,1.1);
    \draw[color=black, style=thick] (1,1.4) -- (2,1.55);
    \draw[color=black, style=thick] (1,1.55) -- (2,1.4);
    \draw[color=black, style=thick] (2,2.55) -- (3,2.4);
    \draw[color=black, style=thick] (2,2.4) -- (3,2.55) node[right] {$b_{e_3}$};
    \draw[color=black, style=thick] (2,.4) -- (3,.55) node[right] {$a_{e_3}$};
    \draw[color=black, style=thick] (2,.55) -- (3,.4);
    
%
%
%
    \draw[line width=.5mm] (0,0) -- (0,.2);
    \draw[line width=.5mm] (0,.35) -- (0,.6);
    \draw[line width=.5mm] (0,.75) -- (0,3);
    
    \draw[line width=.5mm] (1,0) -- (1,.2);
    \draw[line width=.5mm] (1,.35) -- (1,.6);
    \draw[line width=.5mm] (1,.75) -- (1,1);
    \draw[line width=.5mm] (1,1.1) -- (1,1.4);
    \draw[line width=.5mm] (1,1.55) -- (1,3);

    \draw[line width=.5mm] (2,0) -- (2,.4);
    \draw[line width=.5mm] (2,2.55) -- (2,3);
    \draw[line width=.5mm] (2,.55) -- (2,1);
    \draw[line width=.5mm] (2,1.1) -- (2,1.4);
    \draw[line width=.5mm] (2,1.55) -- (2,2.4);

    \draw[line width=.5mm] (3,0) -- (3,.4);
    \draw[line width=.5mm] (3,.55) -- (3,2.4);
    \draw[line width=.5mm] (3,2.55) -- (3,3);

    \foreach \x in {0,...,3} 
      \draw[black,fill=white] (\x,0) circle (0.07);
    \draw (.5,-.1) node {$e_1$};
    \draw (1.5,-.1) node {$e_2$};
    \draw (2.5,-.1) node {$e_3$};

  \end{tikzpicture}

  \caption{An example of a configuration on three edges $e_1,e_2,e_3$. All edges are coloured ($S_{e_1}=S_{e_2}=S_{e_3}=1$) because $n_{e_1}=n_{e_2}=n_{e_3}=2>0$. $e_1$ is red ($R_{e_1}=1$) since $e_1$ has two crosses and no neighbours with a link between $a_{e_1}$ and $b_{e_1}$. $e_2$ is blue ($B_{e_2}=1$) because $n_{e_2}>0$, but not both links are crosses. $e_3$ is blue too because it has a neighbour, $e_2$, with a link between its two crosses, i.e. $N_{e_2}(a_{e_3},b_{e_3}]>0$. Note that, given that $e_1$ is red, the leftmost vertex cannot possibly be in the same loop as the others.}
\end{figure}

\begin{pro}\label{pro:main}
Fix any $\beta>0,u=1$ and some $G$ as in Theorem \ref{thm:maincor}. Let $G'=(V',E')$ be a (not necessarily connected) subgraph of $G$.
Then there exists $\delta>0$ (depending only\footnote{In particular it is independent of $G'$ and other properties of $G$.} on $\beta,\Delta$) such that the Bernoulli product measure $\pi_\delta$ is stochastically dominated by the law of $(R_e)_{e\in E'}$ conditioned on $n_e>0$ for all $e\in E'$.
\end{pro}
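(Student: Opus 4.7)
The plan is to invoke a standard criterion for stochastic domination by a Bernoulli product measure: writing $\mu$ for the law of $(R_e)_{e \in E'}$ conditioned on $\mathcal{C} := \bigcap_{g \in E'}\{n_g > 0\}$, it is enough (by a greedy one-site-at-a-time coupling argument) to produce $\delta = \delta(\beta, \Delta) > 0$ such that
\[
\mu\bigl(R_e = 1 \bigm| R_f = y_f \text{ for all } f \in F\bigr) \ge \delta
\]
for every $e \in E'$, every finite $F \subset E'\setminus\{e\}$, and every $(y_f)_{f \in F} \in \{0,1\}^F$ with $\mu(R_F = y_F) > 0$.

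The first step is a \emph{localization}. Since the Poisson processes $(X_g)_{g \in E}$ are independent across edges and $R_e$ is $\sigma\bigl((X_g)_{g \in N^*(e)}\bigr)$-measurable, with $N^*(e) := \{e\} \cup \{g \in E : g \sim e\}$, an elementary independence argument shows that the above conditional probability equals the one in which the condition is restricted to $F^* := \{f \in F : N^*(f) \cap N^*(e) \neq \emptyset\}$ and the constraints $n_g > 0$ are restricted to the finite set $\tilde E := E' \cap \bigl(N^*(e) \cup \bigcup_{f \in F^*} N^*(f)\bigr)$. The bounded-degree assumption yields $|F^*|, |\tilde E| \le C(\Delta)$ and restricts the local edge-structure within line-graph distance $2$ of $e$ to only finitely many isomorphism classes.

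It therefore suffices to lower-bound the joint probability
\[
\P\bigl(R_e = 1,\ R_{F^*} = y_{F^*},\ n_g > 0 \text{ for all } g \in \tilde E\bigr)
\]
(the corresponding marginal being trivially at most $1$), uniformly in the local class and in $y_{F^*}$. I would do this by explicit construction. Set $F^{*}_{+} := \{e\} \cup \{f \in F^* : y_f = 1\}$ and fix pairwise disjoint sub-intervals $\{J_g : g \in F^{*}_{+}\}$ of $[0, \beta]$ of equal length $\eta := \beta/\bigl(2(C(\Delta)+1)\bigr)$; then consider the Poisson configuration on $N^*(F^*_+) \cup \tilde E$ in which (i) every $g \in F^{*}_{+}$ carries exactly two crosses, both lying in $J_g$, and no crosses outside $J_g$; (ii) every neighbour $g \notin F^{*}_{+}$ of some $h \in F^{*}_{+}$ has no crosses inside $J_h$; and (iii) every $g \in \tilde E \setminus F^{*}_{+}$ carries exactly one cross, placed in $[0,\beta] \setminus \bigcup_{h \in F^{*}_{+}} J_h$. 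One checks directly that this simultaneously realizes $\{R_e=1\}$, $\{R_{F^*}=y_{F^*}\}$ and $\{n_g > 0 \text{ for all } g \in \tilde E\}$, and that its probability under the product Poisson measure is bounded below by a positive constant depending only on $\beta$ and $\Delta$.

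The main obstacle is the bookkeeping in (i)--(iii): when several witnesses $h \in F^{*}_{+}$ are pairwise adjacent in $G$, the ``no cross on neighbour in $(a_h, b_h]$'' restrictions from distinct $h$'s impose overlapping constraints on shared edges, and they must be simultaneously compatible both with ``$h'$ is red'' for every other red witness $h'$ and with the single-cross prescriptions in (iii). Pairwise disjointness of the $J_h$ resolves this: each forbidden region $(a_h, b_h] \subseteq J_h$ is automatically disjoint from every $J_{h'}$ with $h' \ne h$, so the two crosses placed by (i) in $J_{h'}$ and the single crosses placed by (iii) outside every $J_h$ never fall into any forbidden region. Finiteness of the isomorphism classes then allows one to pass from the per-class positive lower bound to a single $\delta(\beta, \Delta) > 0$ as required.
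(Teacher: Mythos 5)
There is a genuine gap, and it sits at the single step that carries all the weight: the ``localization''. You claim that, because the $(X_g)$ are independent across edges and $R_e$ is measurable with respect to $(X_g)_{g\in N^*(e)}$, the conditional probability $\mu(R_e=1\mid R_F=y_F)$ \emph{equals} the one where the conditioning is truncated to $F^*$ and the constraints $n_g>0$ to $\tilde E$. This is false, because the events $R_f=y_f$ are not functions of $X_f$ alone: an edge $f\in F\setminus F^*$ can be adjacent to an edge $f'\in F^*$, so $R_f$ and $R_{f'}$ depend on common coordinates (e.g.\ on a path $e\sim h\sim f'\sim f$, conditioning on $R_f$ biases $(X_{f'},X_f)$, which together with $R_{f'}=y_{f'}$ biases $X_h$, which enters $R_e$). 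These influences chain through overlapping neighbourhoods to arbitrary distance, so no finite truncation gives equality, and not even an inequality in the direction you need: dropping far conditioning can only be replaced by a \emph{worst case} over what that conditioning could impose near $e$. Once the localization fails, your reduction ``denominator $\le 1$, so bound the joint probability below by an explicit configuration'' collapses, since for the untruncated event the joint probability is not bounded below (it tends to $0$ as $|F|\to\infty$, and is $0$ with infinitely many $n_g>0$ constraints).

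This is precisely where the paper's work lies. It first proves a spatial Markov property (its Lemma 3.4) and uses it only to get a \emph{lower bound} by the infimum over all admissible configurations $(X_g)$ on a ``security layer'' around $e_0$, together with arbitrary prescribed values $R_f$ on nearby edges; it then must show this infimum is uniformly positive, which is the content of the pivotality analysis (Lemmas 3.7--3.10): one has to rule out boundary data that forces arbitrarily many crosses onto the neighbours of $e_0$, and boundary data under which the intervals $(a_f,b_f]$ of red neighbours cover almost all of $[0,\beta]$, before the final explicit Poisson computation (which is close in spirit to your construction (i)--(iii)) can be run. Your proposal never confronts these worst-case conditionings --- for instance, conditioning $R_f=0$ on a neighbour of $e$ together with adversarial outside data is compatible with $f$ carrying very many crosses, which is exactly the scenario your ``exactly one cross'' prescription silently excludes --- so the main difficulty of the proposition is bypassed by the invalid equality rather than resolved.
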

Henceforth we only consider the (now fixed) subgraph $G'$, so we introduce the shorthand 
\be
	\mu \deq \P_{\beta,u}(\;\cdot\;|n_e>0\;\forall e\in E'(G')),
\ee
where $\P_{\beta,u}$ is as in the introduction, but restricted to configurations on edges of $G'$.

By \cite[Lemma 1.1]{LSS97}\footnote{Intuitively one would like to apply a simple monotone coupling argument, e.g. as in \cite[p.155f]{FriedliVelenik}; a priori, however, this only works for finite graphs and for the sake of brevity we do not want to concern ourselves with unnecessarily technical limiting procedures.}, it suffices to show that
\be\label{eq:unifPos}
\inf_{e_0\in E'}\mu\left(R_{e_0}=1|R_{e_1}=\e_1,\dots,R_{e_m}=\e_m\right)\geq \delta,
\ee
uniformly over all choices of $e_i\in E'\setminus\{e_0\},\e_i\in\{0,1\}$ for $i=1,\dots,m$ and $m\in \N$, such that $\mu(R_{e_1}=\e_1,\dots,R_{e_m}=\e_m)>0$.
In what follows we consider $e_0\in E'$ to be an arbitrary fixed edge unless explicitly stated otherwise.

\begin{rem}[Notation]
	In what follows we will repeatedly abuse notation to abbreviate statements like \eqref{eq:unifPos} and its following paragraph to the shorter ``$\mu(R_{e_0}=1|\{R_{e_1},\dots,R_{e_m}\})\geq\delta$ uniformly for all $\{R_{e_1},\dots,R_{e_m}\}$". We will consider more general events than $R_{e_0}=1$ and more general \emph{conditions} than $\{R_{e_1},\dots,R_{e_m}\}$. Conditions $\{C_1=c_1,C_2=c_2,\dots,C_m=c_m\}$ will be called \emph{admissible} if there exists a configuration $X$ such that $C_1(X)=c_1, C_2(X)=c_2,\dots$. One might worry about conditioning on such admissible conditions since they may be probability zero events, but we do this only for the sake of an accessible presentation. Instead one might also discretise time (so that there is no need to condition on probability 0 events) and take limits in the end.
	
	For example $R_{e_1}=1$ and $R_{e_2}=0$ with $e_1=e_2$ is not admissible. A more interesting example of conditions that are not admissible are $X_{e_1}$ having two points at heights $a<b$, $R_{e_1}=1$ (so far it is admissible) and $N_{e_2}(a,b]>0$ for some $e_2\sim e_1$.
\end{rem}

\subsection{A spatial Markov property}
In order to avoid having to deal with complicated long-range correlations, we refine the $\sigma$-algebra by allowing to condition not only on finite collections $\{R_e\}_{e\in I\Subset E'}$, but also on some finite collection $\{X_e\}_{e\in I'\Subset E'}$ to obtain a spatial Markov property. 
For functions $f_e:X\mapsto f_e(X)$ that depend only on $X_e$ and its nearest neighbour configurations $\{X_{\tilde e}\}_{\tilde e\sim e}$ we will abuse notation and write $f_e(X_e,X_{\tilde e\sim e})$ instead of $f_e(X)$ to emphasise its dependency on only these variables. 
Endow the set of edges $E'$ with its natural metric $d=d_{E'}$, given by the graph distance of edges and define for each $A\subseteq E'$ its mollification $\overline A^{(k)}\deq \{e':\exists e\in A\text{ s.t. }d(e,e')\leq k\}$.

The following lemma is essentially a spatial Markov property (given that we have something like a ``security layer" that we have full control over):
\begin{lem}\label{lem:Markov}
	For any $A\subseteq E'$ and any collection of functions $\{f_e\}_{e\in E'}$, with $f_e$ depending only on the configuration on $e$ and its nearest neighbours $(X_{e'})_{e':e'\sim e}$, we have:
	\be\label{eq:Markov}
	\mu\left((f_{e})_{e\in A}|(X_e)_{e\in E'\setminus A},(f_e)_{e\in  E'\setminus A}\right) = \mu\left((f_{e})_{e\in A}|(X_e)_{e\in \overline A^{(2)}\setminus A},(f_e)_{e\in \overline A^{(1)} \setminus A}\right).
	\ee
\end{lem}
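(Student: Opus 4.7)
The plan is to exploit the product structure of $\mu$. First, I would observe that under $\mu$ the configurations $(X_e)_{e \in E'}$ are mutually independent: the $X_e$ are independent Poisson point processes on disjoint edges, and the conditioning event $\bigcap_{e \in E'}\{n_e > 0\}$ factorises over edges, so independence is preserved.

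Next I would partition $E'$ into the shells $A$, $\overline A^{(1)} \setminus A$, $\overline A^{(2)} \setminus \overline A^{(1)}$, and $E' \setminus \overline A^{(2)}$, and carry out a short locality bookkeeping. Since each $f_e$ depends only on $X_e$ and on $X_{e'}$ for $e' \sim e$, the triangle inequality for edge-distance yields three measurability statements: $(f_e)_{e \in A}$ is determined by $(X_e)_{e \in \overline A^{(1)}}$; $(f_e)_{e \in \overline A^{(1)} \setminus A}$ is determined by $(X_e)_{e \in \overline A^{(2)}}$ (and in general depends genuinely on the $X_e$ with $e \in A$, so it carries nontrivial information about the configuration inside $A$); and $(f_e)_{e \in E' \setminus \overline A^{(1)}}$ is determined by $(X_e)_{e \in E' \setminus A}$, because for such $e$ both $e$ and all of its neighbours lie at edge-distance at least $1$ from $A$.

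Using the third statement, the $f_e$ for $e \notin \overline A^{(1)}$ are already measurable with respect to the conditioning $\sigma$-algebra on the left-hand side, so the left-hand side of \eqref{eq:Markov} collapses to $\mu\bigl((f_e)_{e \in A} \,\big|\,(X_e)_{e \in E' \setminus A},\,(f_e)_{e \in \overline A^{(1)} \setminus A}\bigr)$. I would then split $(X_e)_{e \in E' \setminus A}$ according to whether the edge lies in $\overline A^{(2)} \setminus A$ or in $E' \setminus \overline A^{(2)}$. By the first two measurability statements, the random object of interest $(f_e)_{e \in A}$ together with all remaining conditioning data are functions of $(X_e)_{e \in \overline A^{(2)}}$, which by the product-measure property is independent of $(X_e)_{e \in E' \setminus \overline A^{(2)}}$. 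A standard conditional-independence argument then removes the latter from the conditioning, yielding the right-hand side.

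The one delicate point, which I expect to be the only real obstacle, is the interpretation of conditioning on the measure-zero events flagged in the preceding Notation remark. I would address this via regular conditional distributions, or alternatively via the time-discretisation trick suggested there; since both alternatives are compatible with the underlying product structure, the argument sketched above carries over without change.
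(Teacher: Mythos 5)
Your argument is correct and follows essentially the same route as the paper's proof: exploit that $\mu$ remains a product measure over edges (the conditioning $\{n_e>0\}$ factorises), record the locality of the $f_e$ (those outside $\overline A^{(1)}$ are determined by $(X_e)_{e\in E'\setminus A}$, those in $\overline A^{(1)}$ by $(X_e)_{e\in \overline A^{(2)}}$), and then drop the independent configuration outside $\overline A^{(2)}$ from the conditioning. Your write-up merely spells out the two reduction steps and the measure-zero caveat more explicitly than the paper does.
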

\begin{proof}
	Note that none of the functions $(f_e)_{e\in E'\setminus \overline A^{(1)}}$ depend on the values of $(X_e)_{e\in A}$. 
	Similarly, none of the functions that depend on $X_e$ for any $e\in A$, i.e. $(f_e)_{e\in \overline A^{(1)}}$, depend on $(X_e)_{e\in E'\setminus \overline A^{(2)}}$. Since $(X_e)_{e\in E'\setminus \overline A^{(2)}}$ and $(X_e)_{e\in \overline A^{(2)}}$ are independent under $\mu$, the result follows.
\end{proof}
	We will use Lemma \ref{lem:Markov} for $f_e = R_e$ for $e\notin A$. If for $e\in A$ we set $f_e = id$, we obtain that, conditioning as in \eqref{eq:Markov}, $X_e$ is independent of the outside, i.e. $(X_e)_{e\in E\setminus \overline A^{(2)}},(R_e)_{e\in E\setminus \overline A^{(1)}}$ or in other words: It only depends on the ``boundary" $\overline A^{(2)}\setminus A$. The same holds for $f_e = R_e$ and $f_e = n_e\deq N_e(0,\beta]$, i.e. the occurrence of double crosses and the number of crosses within $A$, respectively.

\begin{cor}
For arbitrary $A\subseteq E'$ such that $e_0\in A$, all choices of $\tilde E\subseteq E'$ with $|\tilde E|<\infty$, $e_0\notin \tilde E$ and all choices of $(R_e)_{e\in\tilde E}$ we have
	\begin{align*}
	\mu\left(R_{e_0}=1\bigg|(R_e)_{e\in\tilde E}\right)
	&\geq \inf_{(X_e)_{e\in \overline A^{(2)}\setminus A}}\mu\left(R_{e_0}=1\bigg|(R_e)_{e\in\tilde E},(X_e)_{e\in \overline A^{(2)}\setminus A}\right)\\
	&\!\!\eqby{\eqref{eq:Markov}}\!\!\! \inf_{(X_e)_{e\in \overline A^{(2)}\setminus A}}\mu\left(R_{e_0}=1\bigg|(R_e)_{e\in\tilde E\cap \overline A^{(1)}},(X_e)_{e\in \overline A^{(2)}\setminus A}\right),\numberthis\label{eq:redIneq}
	\end{align*}
	where the infimum goes over all \emph{admissible} configurations $(X_e)_{e\in \overline A^{(2)}\setminus A}$.
\end{cor}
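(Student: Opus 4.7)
The plan is to handle the inequality and the equality separately; the inequality is a routine tower-property bound, while the equality is exactly what Lemma~\ref{lem:Markov} was formulated for.

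For the inequality I would condition further on the boundary data $(X_e)_{e\in\overline A^{(2)}\setminus A}$. By the tower property,
\[
\mu\!\left(R_{e_0}=1 \,\Big|\, (R_e)_{e\in\tilde E}\right)
= \E_\mu\!\left[\,\mu\!\left(R_{e_0}=1 \,\Big|\, (R_e)_{e\in\tilde E}, (X_e)_{e\in\overline A^{(2)}\setminus A}\right) \,\Big|\, (R_e)_{e\in\tilde E}\right],
\]
and the right-hand side is an average of the inner probability over realisations of $(X_e)_{e\in\overline A^{(2)}\setminus A}$ that are compatible with $(R_e)_{e\in\tilde E}$, hence at least the infimum over such admissible configurations.

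For the equality I would invoke Lemma~\ref{lem:Markov} with $f_e = R_e$ and the given set $A$. Once the ``security layer'' $(X_e)_{e\in\overline A^{(2)}\setminus A}$ is frozen, the two blocks $(X_e)_{e\in A}$ and $(X_e)_{e\in E'\setminus\overline A^{(2)}}$ are independent, as disjoint pieces of a product of Poisson processes. Since each $R_e$ is a function of $X_e$ and $(X_{e'})_{e'\sim e}$, both $R_{e_0}$ and the variables $(R_e)_{e\in \tilde E\cap\overline A^{(1)}}$ are measurable with respect to $(X_e)_{e\in A}$ together with the frozen boundary, whereas for every $e\in\tilde E\setminus\overline A^{(1)}$ all neighbours $e'\sim e$ lie outside $A$ (otherwise $e$ would be in $\overline A^{(1)}$), so $R_e$ is measurable with respect to $(X_e)_{e\in E'\setminus\overline A^{(2)}}$ together with the boundary. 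Conditional on the boundary, these two collections of $R$-variables are therefore independent, and so conditioning further on $(R_e)_{e\in\tilde E\setminus\overline A^{(1)}}$ has no effect, yielding the claimed equality.

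The only technical wrinkle is the one flagged in the remark preceding the corollary: some of the conditioning events have probability zero. I would sidestep this exactly as suggested there, either by working with the natural regular conditional distributions of the independent Poisson processes, or by discretising time so that every conditioning event has strictly positive probability and then passing to the continuous limit. Neither variant adds anything of substance beyond careful bookkeeping, so the main content of the corollary really is the spatial Markov property already established in Lemma~\ref{lem:Markov}.
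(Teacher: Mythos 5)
Your argument is correct and is exactly the route the paper intends: the corollary is stated as an immediate consequence of Lemma~\ref{lem:Markov}, with the first step being the standard tower-property/averaging bound and the second being the spatial Markov property, which your measurability and conditional-independence bookkeeping (and the discretisation caveat for probability-zero conditioning) spells out faithfully. Nothing is missing or done differently from the paper's (implicit) proof.
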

Thus it suffices to prove that there is a $\delta>0$ such that the r.h.s. of \eqref{eq:redIneq} is bounded from below by $\delta$ for all choices of $(R_e)_{e\in \overline A^{(1)}\setminus \{e_0\}}$ for some $A$ as in the corollary.

We want to choose $A$ as small as possible. It may be tempting to choose $A=\{e_0\}$, but this does not work since one could prescribe an arbitrarily high density of crosses on one of the neighbours of $e_0$ making the probability of satisfying the conditions to colour an edge red arbitrarily small. Instead we choose $A=\{e_0\}\cup\{\tilde e\in E':\tilde e\sim e_0\}$ and prove that the probability of colouring $e_0$ red cannot be made arbitrarily small in this case.

\subsection{Pivotal edges} From now on we consider $A=\{e_0\}\cup\{\tilde e\in E':\tilde e\sim e_0\}$ fixed. Motivated by Lemma \ref{lem:Markov} we will refer to conditioning on some admissible event $\frak B=\{(R_e)_{e\in \overline A^{(1)}\setminus\{e_0\}}, (X_e)_{e\in \overline A^{(2)}\setminus A}\}$ as \emph{boundary conditions} $\frak B$. 
We formalise the intuition that we may restrict our attention to finitely many classes of configurations that ``influence" their adjacent edges' configurations. For these classes the desired properties can then be checked directly.

\begin{defi}
Given some configuration $X$, an edge $e'\in E'$, and an event (written as a function $f:X\to\{0,1\}$), we say \emph{$e'$ is pivotal for $f$}, if $f$ is not invariant under changes to $X_{e'}$ (keeping $(X_{e})_{e\neq e'}$ fixed).
\end{defi}
See Figure \ref{fig:pivotal} for an illustration.
Clearly no $e'$ outside of the support\footnote{Support as defined (e.g. in \cite[Definition 3.11]{FriedliVelenik}) for local functions; i.e. the smallest subset $\tilde E\subseteq E'$ such that if $X$ agrees with $X'$ on $\tilde E$, then $f(X)=f(X')$.} of $f$ is pivotal for $f$. The following lemma will provide more interesting examples:
\begin{lem}\label{lem:pivClassification}
	Fix any $\tilde e\sim e_0$ and let $\frak B$ be any boundary condition that does not prescribe $R_{\tilde e}$. Then $e_0$ is pivotal for $R_{\tilde e}$ if and only if $n_{\tilde e}=2$ and there is no edge $\tilde{\tilde e}\neq e_0$ adjacent to $\tilde e$ with $N_{\tilde{\tilde e}}(a_{\tilde e},b_{\tilde e}]>0$. We call this random event $P_{\tilde e}$.
\end{lem}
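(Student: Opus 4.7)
The plan is to prove both directions of the iff by directly unpacking the definition of red: $R_{\tilde e}=1$ iff $n_{\tilde e}=2$ and $N_e(a_{\tilde e},b_{\tilde e}]=0$ for every edge $e\sim\tilde e$ (recall $u=1$, so all links are crosses). The key observation is a clean separation of which pieces of this definition depend on $X_{e_0}$ and which do not: since $\tilde e\sim e_0$ lies in $A$ but $\tilde e$ itself is not $e_0$, the quantities $n_{\tilde e}$, $a_{\tilde e}$, $b_{\tilde e}$ and $N_{\tilde{\tilde e}}(a_{\tilde e},b_{\tilde e}]$ for $\tilde{\tilde e}\neq e_0$ are all determined by the configuration on edges other than $e_0$. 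In particular every such $\tilde{\tilde e}$ lies in $A\cup(\overline A^{(2)}\setminus A)$, and is therefore either fixed by the boundary condition $\frak B$ or at least not touched by changing $X_{e_0}$ alone. Hence, keeping the complement of $e_0$ frozen, $R_{\tilde e}$ depends on $X_{e_0}$ only through the indicator $\mathbf{1}\{N_{e_0}(a_{\tilde e},b_{\tilde e}]=0\}$ (and only when the other parts of the red condition already hold).

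For the ``only if'' direction, I assume $e_0$ is pivotal, so there are two choices of $X_{e_0}$ producing opposite values of $R_{\tilde e}$. The value $R_{\tilde e}=1$ must be attainable, and this forces the portions of the red definition independent of $X_{e_0}$ to hold, namely (i) $n_{\tilde e}=2$, and (ii) $N_{\tilde{\tilde e}}(a_{\tilde e},b_{\tilde e}]=0$ for every neighbor $\tilde{\tilde e}\neq e_0$ of $\tilde e$. These are exactly the conditions defining $P_{\tilde e}$.

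For the ``if'' direction, I assume $P_{\tilde e}$ holds and exhibit two configurations of $X_{e_0}$ that flip $R_{\tilde e}$: taking $X_{e_0}$ empty yields $N_{e_0}(a_{\tilde e},b_{\tilde e}]=0$, so all three clauses of the red definition hold and $R_{\tilde e}=1$; placing a single cross on $e_0$ at some height inside $(a_{\tilde e},b_{\tilde e}]$ yields $N_{e_0}(a_{\tilde e},b_{\tilde e}]>0$, violating the red condition, so $R_{\tilde e}=0$. Thus $e_0$ is pivotal.

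There is essentially no obstacle here: the statement is a bookkeeping step that isolates precisely when $X_{e_0}$ can influence the colour of a neighboring edge. The only subtlety to check carefully is that the hypothesis ``$\frak B$ does not prescribe $R_{\tilde e}$'' ensures we may legitimately consider both values of $R_{\tilde e}$ as we vary $X_{e_0}$, and that the other inputs to $R_{\tilde e}$ (the configuration on $\tilde e$ and on neighbors of $\tilde e$ other than $e_0$) are either inside $A$ untouched by our variation, or inside $\overline A^{(2)}\setminus A$ where $\frak B$ fixes them.
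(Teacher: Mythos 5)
Your proof is correct and follows essentially the same route as the paper: unpack the definition of red, observe that $n_{\tilde e}$, $a_{\tilde e}$, $b_{\tilde e}$ and $N_{\tilde{\tilde e}}(a_{\tilde e},b_{\tilde e}]$ for $\tilde{\tilde e}\neq e_0$ do not depend on $X_{e_0}$ (giving the ``only if'' direction, since failure of $P_{\tilde e}$ forces $R_{\tilde e}=0$ for every choice of $X_{e_0}$), and for the ``if'' direction exhibit two choices of $X_{e_0}$ --- one with no cross in $(a_{\tilde e},b_{\tilde e}]$, one with a cross there --- that flip $R_{\tilde e}$. The only cosmetic difference is that you take $X_{e_0}$ empty where the paper merely places no cross inside $(a_{\tilde e},b_{\tilde e}]$; since pivotality as defined does not restrict the variation of $X_{e_0}$ to the support of $\mu$, this is immaterial.
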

\begin{proof}
	Recall the condition for colouring $e$ red. The condition that $n_{\tilde e}=2$ is clear, since if that was not the case $R_{\tilde e}=0$ irrespectively of the configurations of any of its neighbours. Note furthermore that once there is a cross between the two crosses on $\tilde e$, $R_{\tilde e}=0$ irrespectively of all the other configurations.
	On the other hand, if $P_{\tilde e}$ is satisfied, one can always choose two different configurations $X_{e_0}$ such that 
	$\tilde e$ is not red with one configuration (place a cross inside $(a_{\tilde e},b_{\tilde e}]$), but red with the other (place no cross inside $(a_{\tilde e},b_{\tilde e}]$).
\end{proof}
\begin{rem}\label{rem:pivotalEffect}
	The reason we are interested in non-pivotal edges is that conditioning on $f$ such that $e$ is not pivotal for $f$ leaves the distribution of $X_e$ invariant.
	More precisely we have that the (conditional) law of $X_{e_0}$ can be written as
	\be\label{eq:pivIndep}
	\mathcal L\left(X_{e_0}|\{X_e,R_e\}_{e\in E'\setminus\{e_0\}}\right) = F\left(\{X_e,R_e\}_{e:\mathbf 1_{P_e}=1}\right),
	\ee
	with $F$ some function not depending on any other parameters (other than the fixed $\Delta,\beta$) such that $F(\emptyset)$ is $\mu$ restricted to $e_0$, i.e. the law of a Poisson point process of intensity $1$ on $[0,\beta]$, conditioned to have at least one cross.
	
	Note that in our setting $(X_e)_{e\sim e_0}$ will be random and hence the right hand side of \eqref{eq:pivIndep} ($P_e$, to be precise) will in general still depend on configurations of these non-pivotal edges.
	
\end{rem}
\begin{defi}
	Given any boundary conditions $\frak B$, we call nearest neighbours $e\sim e_0$ with $\mathbf 1_{P_e}=1$ and $R_e=0$ (or $R_e=1$) \emph{neighbours of type $0$ (or type $1$)}.
\end{defi}
\begin{figure}[hbt]

  \begin{tikzpicture}[thick, scale=1.6]
    \fill[fill=gray!10] (-.5,-.2) -- (3,-.2) node[right] {$G$} -- (3.5,.2) -- (0,.2) 
      -- cycle; 
    \draw[line width=0pt, fill=yellow!30] (0,0) -- (3,0) -- (3,3) -- (0,3) node[left] {$\beta$} -- cycle; 
    \fill[pattern=north east lines, pattern color=blue, line width=0pt] (0,0) -- (1,0) -- (1,3) -- (0,3) -- cycle;
    \draw[color=gray, style=dotted] (2,0) -- (2,3);
    \draw[color=gray, style=dotted] (1,0) -- (1,3);
    
    \draw[color=black, style=thick] (1,.75) -- (2,.9);
    \draw[color=black, style=thick] (1,.9) -- (2,.75);
    \draw[color=black, style=thick] (1,2.4) -- (2,2.55);
    \draw[color=black, style=thick] (1,2.55) -- (2,2.4);
    \draw[color=black, style=thick] (2,.4) -- (3,.55);
    \draw[color=black, style=thick] (2,.55) -- (3,.4);

    \foreach \x in {0,...,3} 
      \draw[black,fill=white] (\x,0) circle (0.07);
    \draw (.5,-.16) node {$e_0$};
    \draw (1.5,-.145) node {$\tilde e$};
    \draw (2.5,-.13) node {$\tilde{\tilde e}$};
  \end{tikzpicture}
  \begin{tikzpicture}[thick, scale=1.6]
    \fill[fill=gray!10] (-.5,-.2) -- (3,-.2) node[right] {$G$} -- (3.5,.2) -- (0,.2) 
      -- cycle; 
    \draw[line width=0pt, fill=yellow!30] (0,0) -- (3,0) -- (3,3) -- (0,3) node[left] {$\beta$} -- cycle; 
    \fill[pattern=north east lines, pattern color=blue, line width=0pt] (0,0) -- (1,0) -- (1,3) -- (0,3) -- cycle;
    \draw[color=gray, style=dotted] (2,0) -- (2,3);
    \draw[color=gray, style=dotted] (1,0) -- (1,3);
    
    \draw[color=black, style=thick] (1,.75) -- (2,.9);
    \draw[color=black, style=thick] (1,.9) -- (2,.75);
    \draw[color=black, style=thick] (1,2.4) -- (2,2.55);
    \draw[color=black, style=thick] (1,2.55) -- (2,2.4);
    \draw[color=black, style=thick] (2,1.6) -- (3,1.45);
    \draw[color=black, style=thick] (2,1.45) -- (3,1.6);
    \draw[color=black, style=thick] (2,.4) -- (3,.55);
    \draw[color=black, style=thick] (2,.55) -- (3,.4);

    \foreach \x in {0,...,3} 
      \draw[black,fill=white] (\x,0) circle (0.07);
    \draw (.5,-.16) node {$e_0$};
    \draw (1.5,-.145) node {$\tilde e$};
    \draw (2.5,-.13) node {$\tilde{\tilde e}$};
  \end{tikzpicture}

  \caption{Examples for $e_0$ being pivotal for $f=R_{\tilde e}$, i.e. $P_{\tilde e}$ (left) and $P_{\tilde e}^c$ (right). The shaded region indicates that $X_{e_0}$ is not conditioned on, i.e. is still random. Note that conditioning on $R_{\tilde e}$ being equal to $0$ (or $1$) will make $\tilde e$ a type $0$ (or a type $1$) neighbour of $e_0$ in the left picture. In the right picture conditioning on $R_{\tilde e}=1$ is not admissible.}\label{fig:pivotal}
\end{figure}
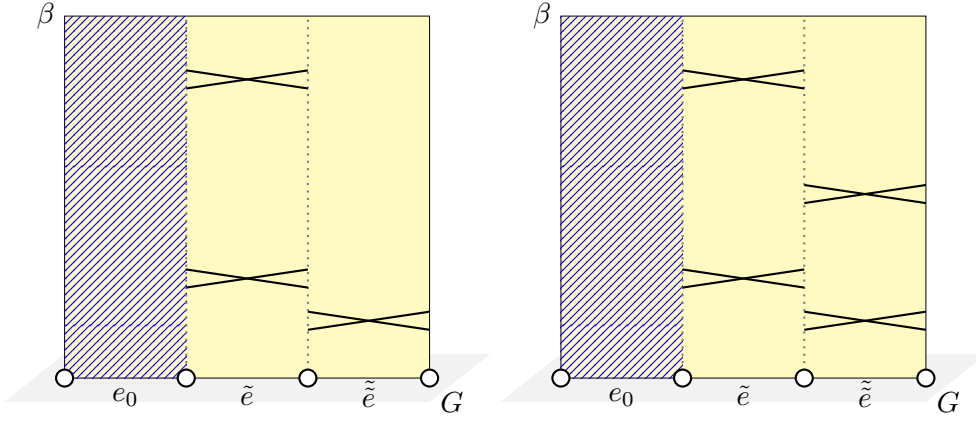

%
In the next lemma we will show that, at the price of a constant (uniform over all admissible boundary conditions), we may assume that $e_0$ has no type $0$ neighbours. 
\begin{lem}\label{lem:pivD=0}
	Fix arbitrary boundary conditions $\frak B$. Then 
	\be
		\mu\left(\bigcap_{e:R_e=0}P_e^c\bigg|\frak B\right)
		\geq \epsilon^{2(\Delta-1)}=:\eta,
	\ee
	where $\epsilon\in(0,1]$ is bounded away from $0$ uniformly over all admissible boundary conditions $\frak B$, for any fixed $\beta>0,\Delta\in\N$.
\end{lem}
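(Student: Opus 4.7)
My plan is to establish the bound in three stages: a reduction, a chain-rule factorization, and a single-edge lower bound for a constrained Poisson process.

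\textbf{Reduction.} By Lemma \ref{lem:pivClassification}, the event $P_{\tilde e}$ requires $n_{\tilde e}=2$, so $\{n_{\tilde e}\neq 2\}\subseteq P_{\tilde e}^c$. It therefore suffices to prove
\[
\mu\!\left(\bigcap_{\tilde e\sim e_0:\, R_{\tilde e}=0}\{n_{\tilde e}\neq 2\}\,\bigg|\,\mathfrak{B}\right)\geq \eta,
\]
which is appealing because each event $\{n_{\tilde e}\neq 2\}$ depends only on $X_{\tilde e}$.

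\textbf{Chain rule.} Enumerating $\tilde e_1,\ldots,\tilde e_k$ the (at most $2(\Delta-1)$) neighbors of $e_0$ with $R_{\tilde e_i}=0$, the chain rule expresses the probability above as a product of $k$ conditional factors of the form $\mu(n_{\tilde e_i}\neq 2\mid \mathfrak{B}, \{n_{\tilde e_j}\neq 2\}_{j<i})$. If each factor is at least some $\epsilon=\epsilon(\beta,\Delta)\in(0,1]$, then because $\epsilon\leq 1$ we obtain $\epsilon^k\geq \epsilon^{2(\Delta-1)}=\eta$.

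\textbf{Single-edge bound.} Fix $i$ and further condition on $(X_{\tilde e_j})_{j\neq i}$ and on $X_{e_0}$ compatible with $\mathfrak{B}$ and the previous events. By Lemma \ref{lem:Markov} the conditional law of $X_{\tilde e_i}$ is a rate-$1$ Poisson point process on $[0,\beta]$, restricted by the boundary to: (a) a finite collection of forbidden sub-intervals (from $R_{e'}=1$ for $e'\sim \tilde e_i$), (b) a finite collection of required-cross sub-intervals (from $R_{e'}=0$ conditions in which $\tilde e_i$ is the unique neighbor able to supply the witnessing cross), (c) the event $R_{\tilde e_i}=0$, and (d) $n_{\tilde e_i}\geq 1$. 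Since the number of such constraints is bounded by a function of $\Delta$, a direct Poisson-weight computation shows the claim: from any admissible $2$-cross configuration one can add a third cross anywhere in the necessarily non-empty allowed region, which preserves all constraints (the required-cross intervals remain hit, the forbidden intervals remain avoided, and $n_{\tilde e_i}=3$ forces $R_{\tilde e_i}=0$ automatically), so the weight ratio between $n=3$ and $n=2$ configurations is controlled.

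\textbf{Main obstacle.} The delicate point is making the single-edge lower bound uniform in $\mathfrak{B}$. Adversarial boundary conditions can shrink the allowed region in $[0,\beta]$ to an arbitrarily small Lebesgue measure, so the simple ``add-a-cross'' ratio $|L|/3$ is not on its own bounded below. The argument must exploit that admissibility guarantees the allowed region is non-empty and that the required-cross constraints, if restrictive, already force $n_{\tilde e_i}\geq 2$ (or more), reducing $\P(n=2\mid \cdots)$ accordingly; a short case analysis distinguishing whether all required-cross intervals have a common point inside the allowed region (in which case $n_{\tilde e_i}=1$ contributes) from the case in which they do not (in which case they force $n_{\tilde e_i}\geq 2$ and the ``add-a-cross'' comparison to $n_{\tilde e_i}=3$ is favourable) yields the uniform $\epsilon(\beta,\Delta)>0$.
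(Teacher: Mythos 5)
Your reduction in the first step discards exactly the part of $P_{\tilde e}^c$ that the lemma needs, and this creates a genuine gap: the event $\bigcap_{e:R_e=0}\{n_e\neq 2\}$ does \emph{not} have a conditional probability bounded below uniformly over admissible boundary conditions, so the inequality you set out to prove after the reduction is false. Concretely, let $w$ be the endpoint of $\tilde e$ not shared with $e_0$, and recall that for edges $e'\ni w$, $e'\neq \tilde e$, the boundary condition $\frak B$ prescribes the full configuration $X_{e'}$ as well as $R_{e'}$. Prescribe one such $e'$ to be red with its two crosses at heights $\epsilon_0$ and $\beta-\epsilon_0$, so that $\tilde e$ is forbidden from carrying crosses in $(\epsilon_0,\beta-\epsilon_0]$; prescribe two further edges $e'',e'''$ at $w$ to have exactly two crosses each, spanning two short disjoint intervals inside $[0,\epsilon_0]$, with $R_{e''}=R_{e'''}=0$ and all their other neighbours prescribed to have no link in those intervals. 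Then admissibility of $R_{e''}=0$ and $R_{e'''}=0$ forces $\tilde e$ to carry at least one cross in each of the two short intervals, and since the allowed region for $X_{\tilde e}$ has Lebesgue measure at most $2\epsilon_0$, one gets $\mu(n_{\tilde e}=2\mid\cdots)\to 1$ as $\epsilon_0\to 0$. So no uniform $\epsilon(\beta,\Delta)$ exists for $\{n_{\tilde e}\neq 2\}$, and the same example defeats your ``add a third cross'' comparison: the ratio between $n=3$ and $n=2$ weights is proportional to the measure of the allowed region, which the boundary can make arbitrarily small. Your ``main obstacle'' paragraph correctly senses this, but the proposed case analysis does not resolve it, because in the case of two disjoint required-cross intervals the comparison to $n=3$ is exactly the one that fails.

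What saves the lemma — and what the paper's proof uses — is the second half of $P_{\tilde e}^c$, namely the event $K\cap\{n_{\tilde e}=2\}$ that some neighbour other than $e_0$ has a link strictly between the two crosses of $\tilde e$. In the adversarial scenario above, the forcing edges themselves have crosses at the endpoints of the required intervals, and these necessarily fall between the two crosses of $\tilde e$, so $P_{\tilde e}^c$ still holds with probability close to one even though $\{n_{\tilde e}\neq 2\}$ does not. The paper's argument is a proof by contradiction built on this dichotomy: if $\mu(n_{\tilde e}\neq 2\mid\frak O_{\tilde e})$ is small, the conditioning must distort the law of $X_{\tilde e}$, which (by Lemma \ref{lem:pivClassification} and Remark \ref{rem:pivotalEffect}) can only happen through type $0$ or type $1$ neighbours; type $1$ neighbours only thin the Poisson process and cannot force $n_{\tilde e}=2$, while type $0$ neighbours automatically put a link between the two crosses and hence make $K$ hold, so the two terms of \eqref{eq:lnotPeId} cannot both be small. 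To repair your proof you would have to keep the union $\{n_{\tilde e}\neq 2\}\cup(K\cap\{n_{\tilde e}=2\})$ intact and run a case analysis of this type rather than bounding $\{n_{\tilde e}\neq 2\}$ alone.
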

\begin{proof}
	Note that it suffices to prove that
	\be
	\mu\left(P_{\tilde e}^c\big| \frak O_{\tilde e}\right)\geq\epsilon,
	\ee
	for $\epsilon>0$ as in the lemma, and some outer configuration $\frak O_{\tilde e}\deq \{(X_e,R_e)_{e\in E'\setminus\{e_0,\tilde e\}},R_{\tilde e}=0\}$ arbitrary with any $\tilde e$ such that $\tilde e\sim e_0$. 
	Denote by $K$ the event (defined on $\{n_{\tilde e}=2\}$) that there is an edge $\tilde{\tilde e}\neq e_0$ adjacent to $\tilde e$ which has a cross between the two crosses on $\tilde e$; by Lemma \ref{lem:pivClassification} we have $P_{\tilde e}^c=\{n_{\tilde e}\neq 2\}\cup (K\cap\{n_{\tilde e}=2\})$.
	Clearly 
	\be\label{eq:lnotPeId}
		\mu\left(P_{\tilde e}^c\big| \frak O_{\tilde e}\right)
		=\mu\left(K\cap \{n_{\tilde e}=2\}\big| \frak O_{\tilde e}\right) +
		  \mu\left(n_{\tilde e}\neq 2\big| \frak O_{\tilde e}\right).
	\ee
	Now assume for contradiction that for every $\e>0$ there is an outer configuration $\frak O_{\tilde e}(\e)$ such that \emph{both} terms on the right hand side of \eqref{eq:lnotPeId} are smaller than $\e/2$. In particular (negating the first term) this implies that 
	\be
		\mu\left((K^c\cap\{n_{\tilde e}=2\})\cup\{n_{\tilde e}\neq 2\}\big|\frak O_{\tilde e}(\e)\right)\geq 1-\frac{\e}{2}.
	\ee
	By assumption we have $\mu(n_{\tilde e}\neq 2|\frak O_{\tilde e}(\e))<\e/2$ and thus
	\be\label{eq:notKbig}
	\mu\left(K^c\cap\{n_{\tilde e}=2\}\big|\frak O_{\tilde e}(\e)\right)\geq 1-\e.
	\ee
	It remains to show that for $\e>0$ sufficiently small there cannot be an outer configuration $\frak O_{\tilde e}(\e)$ such that both $\mu(n_{\tilde e}\neq 2|\frak O_{\tilde e}(\e))<\e/2$ and \eqref{eq:notKbig} hold simultaneously. Note that to satisfy the former constraint for $\e$ sufficiently small we surely have to have at least one condition that changes the law of $X_{\tilde e}$. By Lemma \ref{lem:pivClassification} and Remark \ref{rem:pivotalEffect} (with $\tilde e$ taking the role of $e_0$) this can only be done by having neighbours of type $0$ or $1$. We cannot condition on neighbours being of type $0$, however, since $K$ would be true in this case and hence we could not possibly satisfy \eqref{eq:notKbig}. On the other hand it is easy to see that neighbours of type $1$ cannot possibly increase the number of crosses in probability.\footnote{Note that the result of conditioning only on type $1$ neighbours is again a Poisson point process of the same intensity (and conditioned to have at least one cross) on a strict subset of $[0,\beta]$.} Noting that under the law of a Poisson point process of intensity $1$ on $[0,\beta]$ conditioned to have at least one cross there is a strictly positive probability of having less than $2$ crosses gives the desired contradiction and thus finishes the proof.
\end{proof}
Now that we showed that with uniformly positive probability $e_0$ is not pivotal for $\tilde e\sim e_0$ with $R_{\tilde e}=0$ (and hence the law of $X_{e_0}$ is not influenced), we still need to show that for these edges $n_{\tilde e}<N$ with uniformly positive probability for some sufficiently large, but finite $N$ --- otherwise the probability of $\{R_{e_0}=1\}$ could be made arbitrarily small.
\begin{lem}\label{lem:nBnd}
	Fix arbitrary boundary conditions $\frak B$. Then
	\be
	\mu\left(\sum_{e:R_e=0\atop{e\sim e_0}}n_e<N\bigg| \bigcap_{e:R_e=0}P_e^c,\frak B\right)\geq \frac{1}{2},
	\ee
	for sufficiently large $N\in \N$ that depends only on $\beta, \Delta$.
\end{lem}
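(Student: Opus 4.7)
My plan is to apply Markov's inequality. Since the number of neighbours of $e_0$ in $E'$ is bounded by $2(\Delta-1)$, it suffices to show there is a constant $C=C(\beta,\Delta)<\infty$ with
\[
	\E_\mu\!\left[n_{\tilde e}\,\bigg|\,\bigcap_{e:R_e=0}P_e^c,\,\frak B\right]\leq C
\]
for every $\tilde e\sim e_0$ with $R_{\tilde e}=0$ (as prescribed by $\frak B$); then $N:=4(\Delta-1)C$ works.

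I fix such a $\tilde e$ and, via the tower property, reduce the problem to bounding the conditional expectation of $n_{\tilde e}$ after additionally conditioning on an arbitrary admissible realisation of $(X_{e'})_{e'\in A\setminus\{\tilde e\}}$. The crucial step is then to classify the resulting constraints on $X_{\tilde e}$. I claim every event in $\frak B\cap\bigcap_{e:R_e=0} P_e^c$ that still depends on $X_{\tilde e}$ is of exactly one of three types: (F) a \emph{forbidden-interval} constraint requiring no cross of $\tilde e$ in some fixed interval $I\subseteq[0,\beta]$, arising from $R_{e''}=1$ conditions with $e''\sim\tilde e$ and $e''\in\overline A^{(1)}\setminus A$; (R) a \emph{required-interval} constraint demanding at least one cross of $\tilde e$ in some fixed interval $J\subseteq[0,\beta]$, arising from those $R_{e''}=0$ or $P_{e'}^c$ conditions whose sole remaining witness, after the freezing of all other $X_{e'}$'s, must be a cross of $\tilde e$; and (P) the clause $P_{\tilde e}^c$ itself, which is automatic on $\{n_{\tilde e}\neq 2\}$. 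Since every (F) or (R) attaches to a distinct edge neighbouring $\tilde e$, at most $2(\Delta-1)$ of them appear. Establishing this classification, by running through each conditioning event with Lemma~\ref{lem:pivClassification} in hand, is the main obstacle and also the heart of the proof.

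With the classification in place, I split
\[
	\E[n_{\tilde e}\mid\cdot]\leq 2+\E[n_{\tilde e}\mathbf 1_{\{n_{\tilde e}\neq 2\}}\mid\cdot].
\]
On $\{n_{\tilde e}\neq 2\}$ the clause (P) is vacuous, so
\[
	\E[n_{\tilde e}\mathbf 1_{\{n_{\tilde e}\neq 2\}}\mid\cdot]\leq \frac{\E[n_{\tilde e}\mid (F)\cap(R)]}{\P(n_{\tilde e}\neq 2\mid(F)\cap(R))}.
\]
Under (F) and (R), $X_{\tilde e}$ is a rate-one Poisson point process on $[0,\beta]\setminus I_{\mathrm{forbid}}$ further conditioned on containing at least one cross in each of $\ell\leq 2(\Delta-1)$ fixed intervals $J_1,\dots,J_\ell$. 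Decomposing the process into its restrictions to the $J_i$'s and to their complement bounds the numerator by $\beta+\ell(1+\beta)$, using that a Poisson$(\lambda)$ variable conditioned on being $\geq 1$ has mean $\lambda/(1-e^{-\lambda})\leq 1+\lambda\leq 1+\beta$. A direct Poisson estimate, exploiting that admissibility of $\frak B\cap\bigcap P_e^c$ prevents $I_{\mathrm{forbid}}$ from covering $[0,\beta]$, bounds the denominator below by a positive constant depending only on $\beta$ and $\Delta$. Combining these yields the required $C(\beta,\Delta)$.
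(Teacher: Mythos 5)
Your overall strategy (Markov's inequality plus a classification, via Lemma \ref{lem:pivClassification}, of how the conditioning acts on $X_{\tilde e}$ through forbidden and required intervals attached to distinct neighbours) is the same as the paper's, but the way you dispose of the clause $P_{\tilde e}^c$ contains a genuine gap. Your final step claims that $\P\bigl(n_{\tilde e}\neq 2\mid (F)\cap(R)\bigr)$ is bounded below by a constant depending only on $\beta,\Delta$, using only that admissibility prevents $I_{\mathrm{forbid}}$ from covering $[0,\beta]$. That is false: admissibility only forces $[0,\beta]\setminus I_{\mathrm{forbid}}$ to have \emph{positive} measure, not measure bounded below. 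Concretely (take $\Delta\geq 4$, e.g.\ $G=\Z^2$), let one neighbour $e''\sim\tilde e$ carry $R_{e''}=1$ with its two crosses at heights $\epsilon$ and $\beta-\epsilon$, so that $(F)$ forbids crosses of $\tilde e$ on $(\epsilon,\beta-\epsilon]$, and let two further neighbours $f_1,f_2\sim\tilde e$ with $R_{f_i}=0$ have their two frozen crosses inside $[0,\epsilon]$ and inside $(\beta-\epsilon,\beta]$ respectively, with no other frozen witness, so that $(R)$ demands one cross of $\tilde e$ in each of two tiny intervals near $0$ and near $\beta$. This boundary data is admissible (it is essentially the right panel of Figure 4), yet under $(F)\cap(R)$ the process on $\tilde e$ has exactly two crosses with probability $1-O(\epsilon)$, so your denominator is $O(\epsilon)$ and your bound on $\E[n_{\tilde e}\mathbf 1_{\{n_{\tilde e}\neq2\}}\mid\cdot\,]$ blows up as $\epsilon\to0$. (A smaller wrinkle: your numerator bound $\beta+\ell(1+\beta)$ is obtained by a disjoint decomposition, but the required intervals $J_i$ need not be disjoint; this is repairable, unlike the denominator claim as stated.)

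Note that in the above scenario the quantity you actually need, namely the conditional probability of $P_{\tilde e}^c$ given $(F)\cap(R)$ and the frozen data, is close to $1$ (the crosses of $e''$ lie between the two forced crosses of $\tilde e$, so $P_{\tilde e}^c$ holds even on $\{n_{\tilde e}=2\}$); your error is in replacing $P_{\tilde e}^c$ by the strictly smaller event $\{n_{\tilde e}\neq 2\}$ before taking the lower bound. The correct denominator, $\P(P_{\tilde e}^c\mid\text{outer data})$, \emph{is} uniformly positive, but that is precisely the content of (the per-edge form of) Lemma \ref{lem:pivD=0} and requires its argument, not a direct Poisson computation. This is exactly how the paper's proof avoids the trap: it never conditions the expectation on the interval constraints and asks $\{n_{\tilde e}\neq2\}$ to be likely; instead it bounds $\mu(n_{e'}>\tilde N,\bigcap_{e:R_e=0}P_e^c\mid\frak O_{e'})$ by Markov applied to $\E_\mu(n_{e'}\mid\frak O_{e'})\leq 1+2(\Delta-1)+\beta$ and divides once by $\eta$ from Lemma \ref{lem:pivD=0}. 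Your proof can be repaired by making the same move (keep $Q(P_{\tilde e}^c)$ as the denominator and invoke Lemma \ref{lem:pivD=0}), but as written the key uniform lower bound is not established.
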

\begin{proof}
	We show a stronger statement. For any fixed edge $e'$ with arbitrary conditioning ``on the outside" $\frak O_{e'} = \{(X_e)_{e\in E\setminus\{e'\}},(R_e)_{e\in E}\}$ we have
	\begin{align*}
		\mu\left(n_{e'}>\tilde N\bigg|\bigcap_{e:R_e=0}P_e^c,\frak O_{e'}\right)\eta
		&\!\leqby{\ref{lem:pivD=0}} \mu\left(n_{e'}>\tilde N,\bigcap_{e:R_e=0} P_e^c|\frak O_{e'}\right)\\
		&\leq \frac{\E_\mu(n_{e'}|\frak O_{e'})}{\tilde N}\\
		&\leq \frac{1+|\{e:e\sim e'\}|+\beta}{\tilde N} \leq \frac{1+2(\Delta-1)+\beta}{\tilde N}.\numberthis\label{eq:nBnd}
	\end{align*}
	The third inequality follows from Lemma \ref{lem:pivClassification} and Remark \ref{rem:pivotalEffect} (with $e'$ taking the role of $e_0$). Note again that type $0$ neighbours only decrease the number of points and every type $1$ neighbour can force at most one additional cross on $e'$.\footnote{This follows from the fact that a Poisson point process on $[0,\beta]$ conditioned to have at least one cross in some interval $[a,b]\subseteq[0,\beta]$ is still just a Poisson point process on $[0,\beta]\setminus[a,b]$ and for the process on $[a,b]$ we simply note that $\P(N(a,b]>1|N(a,b]>0)=\P(N(a,b]>0)$.} Hence
	\begin{align*}
		\mu\bigg(\bigcap_{e:e\sim e_0\atop R_e=0}\{n_e\leq \tilde N\}\bigg| \bigcap_{e:R_e=0} P_e^c,\frak B\bigg) 
		\geqby{\eqref{eq:nBnd}} \left[1-\frac{1+2(\Delta-1)+\beta}{\tilde N}\eta^{-1}\right]^{2(\Delta-1)},\numberthis
	\end{align*}
	which, for $\tilde N$ sufficiently large, implies the result.
\end{proof}
Now that we can be sure there is not going to be an ``unlimited number" of crosses on edges adjacent to $e_0$ (Lemma \ref{lem:nBnd}) and that we may condition on the case that $e_0$ is only pivotal for $R_e$ that are conditioned to be $1$ (Lemma \ref{lem:pivD=0}), the last thing to prove is that there are no boundary conditions that make $\bigcup_{e:e\sim e_0\atop R_e=1} (a_e,b_e]$ arbitrarily close to $[0,\beta]$. (Recall that $0\leq a_e < b_e\leq\beta$ are the heights of the first and the second cross on the edge $e$ respectively.)
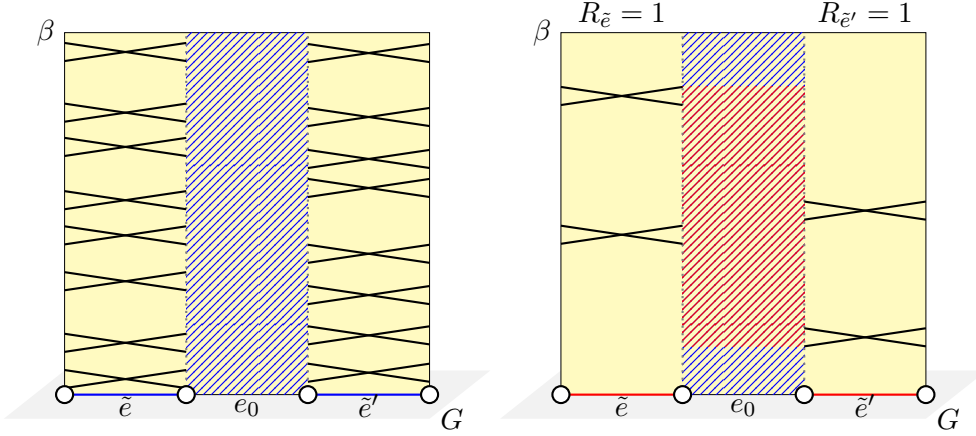
\begin{figure}[hbt]
  \begin{tikzpicture}[thick, scale=1.6]
    \fill[fill=gray!10] (-.5,-.2) -- (3,-.2) node[right] {$G$} -- (3.5,.2) -- (0,.2) 
      -- cycle; 
    \draw[line width=0pt, fill=yellow!30] (0,0) -- (3,0) -- (3,3) -- (0,3) node[left] {$\beta$} -- cycle; 
    \fill[pattern=north east lines, pattern color=blue, line width=0pt] (1,0) -- (2,0) -- (2,3) -- (1,3) -- cycle;
    \draw[color=gray, style=dotted] (2,0) -- (2,3);
    \draw[color=gray, style=dotted] (1,0) -- (1,3);
    
    \foreach \y in {0,...,7}
    {\foreach \x in {0,2}
      {
      \pgfmathsetmacro{\ycoord}{\y/2.7+.08+rand*0.1}
      \draw[color=black, style=thick] (\x,\ycoord) -- (\x+1,\ycoord+.15);
      \draw[color=black, style=thick] (\x+1,\ycoord) -- (\x,\ycoord+.15);
      }
    }    
    
    \draw[color=blue, style=thick] (0,0) -- (1,0);
    \draw[color=blue, style=thick] (2,0) -- (3,0);

    \foreach \x in {0,...,3} 
      \draw[black,fill=white] (\x,0) circle (0.07);
    \draw (.5,-.1) node {$\tilde e$};
    \draw (1.5,-.11) node {$e_0$};
    \draw (2.5,-.09) node {$\tilde e'$};
  \end{tikzpicture}
  \begin{tikzpicture}[thick, scale=1.6]
    \fill[fill=gray!10] (-.5,-.2) -- (3,-.2) node[right] {$G$} -- (3.5,.2) -- (0,.2) 
      -- cycle; 
    \draw[line width=0pt, fill=yellow!30] (0,0) -- (3,0) -- (3,3) -- (0,3) node[left] {$\beta$} -- cycle; 
    \fill[pattern=north east lines, pattern color=blue, line width=0] (1,0) -- (2,0) -- (2,3) -- (1,3) -- cycle;
    \fill[pattern=north east lines, pattern color=red] (1,.4) -- (2,.4) -- (2,2.55) -- (1,2.55) -- cycle;
    \draw[color=gray, style=dotted] (2,0) -- (2,3);
    \draw[color=gray, style=dotted] (1,0) -- (1,3);
    
    \draw[color=black, style=thick] (0,1.25) -- (1,1.4);
    \draw[color=black, style=thick] (0,1.4) -- (1,1.25);

    \draw[color=black, style=thick] (0,2.4) -- (1,2.55);
    \draw[color=black, style=thick] (0,2.55) -- (1,2.4);

    \draw[color=black, style=thick] (2,1.6) -- (3,1.45);
    \draw[color=black, style=thick] (2,1.45) -- (3,1.6);

    \draw[color=black, style=thick] (2,.4) -- (3,.55);
    \draw[color=black, style=thick] (2,.55) -- (3,.4);
   
    \draw[color=red, style=thick] (0,0) -- (1,0);
    \draw[color=red, style=thick] (2,0) -- (3,0);

    \foreach \x in {0,...,3} 
      \draw[black,fill=white] (\x,0) circle (0.07);
    \draw (.5,3.15) node {$R_{\tilde e}=1$};
    \draw (2.5,3.15) node {$R_{\tilde e'}=1$};

    \draw (.5,-.1) node {$\tilde e$};
    \draw (1.5,-.11) node {$e_0$};
    \draw (2.5,-.09) node {$\tilde e'$};
  \end{tikzpicture}

  \caption{The left picture illustrates how the probability of $e_0$ being red is small if there are many crosses on its neighbours, while the right picture illustrates what could go wrong with type $1$ neighbours. The area shaded in red emphasises that, in order not to violate $R_{\tilde e}=1$ and $R_{\tilde e'}=1$, no crosses are allowed in that region.}
\end{figure}

\begin{lem}\label{lem:D1notAll}
	Fix arbitrary boundary conditions $\frak B$. Then
	\be
	\mu\left(\bigg|[0,\beta]\setminus\bigcup_{e:e\sim e_0\atop R_e=1} (a_e,b_e]\bigg|\geq\frac{\beta}{2}\biggg|\bigcap_{e:R_e=0} P_e^c,\sum_{e:e\sim e_0\atop R_e=0}n_e<N,\frak B\right)\geq \epsilon',
	\ee
	for some sufficiently small $\epsilon'>0$ that depends only on $\beta,\Delta$. 
\end{lem}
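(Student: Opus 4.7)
The plan is to reduce to a per-edge estimate. Denote by $T$ the set of type-$1$ neighbours of $e_0$; since every vertex has degree at most $\Delta$, we have $|T| \le M := 2(\Delta-1)$. From the trivial bound
\[
	|J| = \bigg|\bigcup_{\tilde e \in T}(a_{\tilde e}, b_{\tilde e}]\bigg| \le \sum_{\tilde e \in T}(b_{\tilde e} - a_{\tilde e}),
\]
setting $L_{\tilde e} := b_{\tilde e} - a_{\tilde e}$ and $\tau := \beta/(2M)$, the event $\bigcap_{\tilde e \in T}\{L_{\tilde e} \le \tau\}$ is contained in $\{|J^c| \ge \beta/2\}$, so it suffices to lower bound the probability of the former by some $\epsilon' > 0$ depending only on $\beta, \Delta$.

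The key per-edge estimate goes as follows. Fix $\tilde e \in T$ and further condition on $(X_{e'})_{e' \ne \tilde e}$, which is legitimate by the spatial Markov property (Lemma \ref{lem:Markov}). Once all neighbours of $\tilde e$ are frozen, the parts of the conditioning that still involve $X_{\tilde e}$ reduce to requiring $n_{\tilde e} = 2$ (from $R_{\tilde e} = 1$) and that $(a_{\tilde e}, b_{\tilde e}]$ avoid a prescribed finite set $S \subset [0, \beta]$ of neighbouring crosses. Since $X_{\tilde e}$ is marginally a rate-$1$ Poisson point process on $[0, \beta]$, the resulting conditional law of $(a_{\tilde e}, b_{\tilde e})$ is \emph{uniform} on the admissible region $D = \bigcup_l \{(a,b): a, b \in I_l, \, a < b\}$, where $I_1, \ldots, I_L$ are the gaps of $[0, \beta] \setminus S$. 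A direct density calculation then yields
\[
	\mu(L_{\tilde e} > \tau \mid \text{else}) = \frac{\sum_l (|I_l| - \tau)_+^2}{\sum_l |I_l|^2} \le 1 - \frac{\tau}{\beta},
\]
using the elementary inequality $(|I_l| - \tau)_+^2 \le |I_l|^2 (1 - \tau/\beta)$ valid for $|I_l| \le \beta$. Hence $\mu(L_{\tilde e} \le \tau \mid \text{else}) \ge 1/(2M)$ uniformly in the conditioning.

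These per-edge bounds combine via iterated conditioning. Enumerating $T = \{\tilde e_1, \ldots, \tilde e_k\}$ with $k \le M$ and unpeeling one variable at a time produces
\[
	\mu\bigg(\bigcap_{i=1}^k \{L_{\tilde e_i} \le \tau\}\;\bigg|\;\text{cond}\bigg) \ge (2M)^{-k} \ge (2M)^{-M} =: \epsilon'.
\]
The main obstacle lies in justifying the clean union-of-rectangles shape of $D$ once the additional conditioning on $\bigcap_{e:R_e=0} P_e^c$ is taken into account. A non-red neighbour $\tilde e' \in A$ of $\tilde e$ whose event $P_{\tilde e'}^c$ is realised solely through $\tilde e$ itself (rather than through a boundary-fixed neighbour already providing the required cross) enforces a further ``forced pickup'' restriction that at least one of $a_{\tilde e}, b_{\tilde e}$ lies in $(a_{\tilde e'}, b_{\tilde e'}]$, which is not of rectangular-union type. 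Since $\tilde e$ carries only two crosses, at most two such constraints can be active simultaneously, and one handles them by a finer case split: either restricting to the uniformly-positive-probability event that some boundary-fixed neighbour supplies the required cross (in which case the argument above runs unchanged), or analysing the resulting product-of-two-intervals admissible region by an analogous density estimate. In either sub-case the per-edge bound survives with a possibly smaller but still $\beta, \Delta$-dependent positive constant, which finishes the proof upon adjusting $\epsilon'$ accordingly.
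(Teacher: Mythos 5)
Your proposal is correct and shares its skeleton with the paper's proof: reduce to a per-edge estimate for each conditioned-red neighbour $\tilde e$ of $e_0$ with threshold $\frac{\beta}{2}\frac{1}{2(\Delta-1)}$, make that estimate uniform over arbitrary admissible conditioning on everything outside $\tilde e$, and multiply the at most $2(\Delta-1)$ per-edge bounds by iterated conditioning. Where you genuinely differ is in how the per-edge bound is obtained. The paper argues qualitatively via Lemma \ref{lem:pivClassification} and Remark \ref{rem:pivotalEffect}: only type-$0$/type-$1$ neighbours of $\tilde e$ influence $X_{\tilde e}$, type-$0$ ones are declared inadmissible, and type-$1$ ones only shrink $b_{\tilde e}-a_{\tilde e}$ stochastically, so one compares with the unconditioned two-cross configuration on $[0,\beta]$. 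You instead identify the conditional law of $(a_{\tilde e},b_{\tilde e})$ as uniform on a union of triangles $\{a<b,\ a,b\in I_l\}$ over the gaps $I_l$ of the neighbouring crosses and derive the explicit bound $\mu(L_{\tilde e}\le\tau\mid\cdot)\ge\tau/\beta$; this is a valid and more quantitative alternative, and its per-gap nature makes it insensitive to which sub-collection of gaps is admissible. That matters precisely for the ``forced pickup'' issue you flag: when $P_e^c$ for some non-red $e\sim e_0$, $e\sim\tilde e$ can only be realised through $\tilde e$, the conditioning forces a cross of $\tilde e$ into $(a_e,b_e]$. The paper dismisses this case as inadmissible (the footnote in its proof, and the claim that type-$0$ neighbours of $\tilde e$ would violate $R_{\tilde e}=1$), which is in fact questionable --- placing both crosses of $\tilde e$ inside a gap contained in $(a_e,b_e)$ is perfectly compatible with $R_{\tilde e}=1$ --- so your decision to treat it is sound. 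However, your resolution of it is both sketchier and more complicated than necessary: since $a_e,b_e$ belong to the excluded set $S$ and $R_{\tilde e}=1$ already forces both crosses of $\tilde e$ into a single gap of $S$, a forced-pickup constraint (even two simultaneous ones) merely restricts the admissible union of triangles to those gaps lying inside the forced interval(s); no ``product of two intervals'' region ever arises, and your original density estimate applies verbatim to the restricted union (if no gap satisfies all forced constraints, the conditioning is simply not admissible). Also, the first branch of your case split (``restricting'' to the event that a boundary-fixed neighbour supplies the cross) is not something you may choose, as the outside configuration is adversarial; with the observation above the split is unnecessary. Finally, you omit one family of constraints: neighbours $e'\sim\tilde e$ conditioned red force $\tilde e$'s crosses to avoid $(a_{e'},b_{e'}]$, but since $a_{e'},b_{e'}\in S$ this again only deletes whole gaps and leaves your bound intact. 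With these points made precise your argument is complete and yields the lemma with the explicit constant $\epsilon'=(4(\Delta-1))^{-2(\Delta-1)}$.
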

\begin{proof}
It is enough to show that 
\be
\mu\left(b_{\tilde e}-a_{\tilde e}<\frac{\beta}{2}\frac{1}{2(\Delta-1)}\bigg|\frak O_{\tilde e}\right)\geq \epsilon''
\ee
for any red $\tilde e$ adjacent to $e_0$ and arbitrary admissible conditioning on its outside, i.e. $\frak O_{\tilde e}\deq \{(X_e)_{e\in E'\setminus\{\tilde e\}},(R_e)_{e\in E'\setminus\{\tilde e\}},R_{\tilde e}=1\}$\footnote{Note that the set of configurations where $\sum_{e:R_e=0\atop e\sim e_0}n_e<N$ is a subset of the set of admissible $\frak O_{\tilde e}$. The condition that $\bigcap_{e:R_e=0} P_e^c$ is also included. To see this, note that $P_e^c$ either because (a) $n_e\neq 2$ (in that case it is included in the $X_e$ conditioning), or (b) there is some $\tilde{\tilde e}\notin\{\tilde e,e_0\}$ s.t. $N_{\tilde{\tilde e}}(a_e,b_e]>0$ (again, this is included in the $X_{\tilde{\tilde e}}$ conditioning), or (c) $N_{\tilde e}(a_e,b_e]>0$, but this is not admissible, since $\tilde e$ is red by assumption.} and some $\epsilon''>0$ independently of $\tilde e$ and $\frak O_{\tilde e}$.

By Lemma \ref{lem:pivClassification} and Remark \ref{rem:pivotalEffect} (with $\tilde e$ taking the role of $e_0$) only type $0$ and type $1$ neighbours affect the distribution of $X_{\tilde e}$ (and hence $b_{\tilde e}-a_{\tilde e}$ which is a deterministic function of $X_{\tilde e}$). Type $0$ neighbours, however, are not admissible as they would violate $R_{\tilde e}=1$. On the other hand it is easy to see that type $1$ neighbours only make $b_{\tilde e}-a_{\tilde e}$ smaller in probability, as it results in $X_{\tilde e}$ being a Poisson point process (conditioned to have at least one cross) on a subset of $[0,\beta]$ of the same intensity.
Thus we get\todo{could make this more precise, but then it gets really unnecessarily messy; same for the proof of Lemma \ref{lem:pivD=0}} 
\be
\mu\left(b_{\tilde e}-a_{\tilde e}<\frac{\beta}{2}\frac{1}{2(\Delta-1)}\bigg|\frak O_{\tilde e}\right)\geq \mu\left(b_{\tilde e}-a_{\tilde e}<\frac{\beta}{2}\frac{1}{2(\Delta-1)}\bigg| n_{\tilde e}=2\right).
\ee
Noting that a Poisson point process of intensity $1$ on $[0,\beta]$, conditioned to have two crosses, has a strictly positive probability of placing both crosses within any given (positive) distance finishes the proof.
\end{proof}
\subsection{Reduction to a Poisson point process}
With these lemmas we can now proceed to prove Proposition \ref{pro:main} as follows:
\begin{align*}
	\mu\left(R_{e_0}=1|\frak B\right) 
	&\geq \mu\left(R_{e_0}=1,\bigcap_{e:R_e=0} P_e^c \bigg| \frak B\right)\\
	&\!\!\geqby{\ref{lem:pivD=0}}\! \mu\left(R_{e_0}=1 \bigg| \bigcap_{e:R_e=0} P_e^c,\frak B\right)\eta\\
	&\!\!\geqby{\ref{lem:nBnd}}\! \mu\left(R_{e_0}=1\bigg|\bigcap_{e:R_e=0} P_e^c,\sum_{e:e\sim e_0\atop R_e=0}n_e<N,\frak B\right)\frac{1}{2}\eta\\
	&\!\!\geqby{\ref{lem:D1notAll}}\! \mu\left(R_{e_0}=1\bigg|\bigcap_{e:R_e=0} P_e^c,\sum_{e:e\sim e_0\atop R_e=0}n_e<N,\bigg|[0,\beta]\setminus\bigcup_{e:e\sim e_0\atop R_e=1} (a_e,b_e]\bigg|\geq\frac{\beta}{2},\frak B\right)\frac{1}{2}\eta\epsilon'.\numberthis\label{eq:lastline}
\end{align*}
Now we are in a setting where $e_0$ has only ``small" type $1$ neighbours and the other neighbours have some uniformly bounded number of crosses. We proceed by remarking once again that the law of $X_{e_0}$ with only type $1$ neighbours is that of a Poisson point process conditioned to have at least one cross with intensity $1$ on $[0,\beta]\setminus(\bigcup_{e:R_e=1\atop e\sim e_0}(a_e,b_e])$ --- the Lebesgue measure of which is ensured to be larger or equal to $\beta/2$. Since the number of neighbours is bounded by $2(\Delta-1)$ it also has at most $2(\Delta-1)+1$ disconnected components. Moreover, since we do not condition on $R_{e_0}$ and we have $\bigcap_{e:R_e=0} P_e^c$, 
it follows that $(X_e)_{e\sim e_0:R_e=0}$ and $X_{e_0}$ are conditionally (conditioned on the exact configuration of type $1$ neighbours) independent. Thus (by conditioning on every admissible deterministic $(X_e)_{e\sim e_0:R_e=1}$ satisfying the bound from Lemma \ref{lem:D1notAll} and taking the essential infimum) we conclude that\todo{this may need additional explanation}
\be
\eqref{eq:lastline}\geq \P(S)\P(M)\eta\epsilon'/2=\const_{\beta,\Delta}>0,
\ee
where $S$ is the event that a Poisson point process (conditioned to have at least one cross) of intensity $1$ on $2(\Delta-1)+1$ disjoint intervals $I_i$ of size $|I_i|=\frac{\beta}{2}\frac{1}{2(\Delta-1)+1}$ drops exactly two crosses at heights $a<b$ in the same interval $I_i$ and $M$ is the event that $N$ crosses (uniformly distributed on the intervals $I_i$) do not fall between $a,b$. Clearly the product of these events is bounded away from zero. This concludes the proof of Proposition \ref{pro:main}.\qed

\section{Generalisations}\label{sec:outlook}
The result may be extended in various directions. We briefly discuss some of them.

\subsection{Expander graphs and more}\label{subsec:expanders}
Instead of one fixed infinite graph $G$, consider now a sequence of finite connected graphs $(G_n)_n = (V_n,E_n)_n$. 
A natural analogue of the critical percolation parameter $p_c$ is the smallest $p$ such that there exists a percolation cluster of size proportional to $|V_n|$ with positive probability; more formally, we define 
\be 
p_c((G_n)_n) \deq\inf\{p:\exists c>0:\lim_n\pi_{G_n,p}(|C(v)|>c|V_n|)>0\}\quad\text{and}\quad\beta_c^\text{per}((G_n)_n)\deq -\ln(1-p_c((G_n)_n)),
\ee
where $\pi_{G_n,p}$ is the Bernoulli bond-percolation measure with parameter $p$ on $G_n$. Note that without further assumptions on the sequence of graphs $(G_n)_n$ it is not clear whether $p_c((G_n)_n )$ even exists. One might think that requiring macroscopic clusters on finite subgraphs is a much stronger condition than merely infinite clusters on infinite graphs, but it is conjectured (and partially proven) that they are compatible in the sense that $\lim_n p_c((G_n)_n) = p_c(G)$, whenever $G_n$ converges in a suitable sense to $G$ and all $G_n$ are sufficiently nice -- e.g. regular expander graphs with diverging girth; see \cite{ABS04} and \cite[Conjecture 1.2]{BBLR12} for a more thorough discussion.
Similarly, we define $\beta_c$ to be 
\be
	\beta_c((G_n)_n,u)\deq \inf\{\beta:\exists c>0:\lim_n\P_{G_n,\beta,u}(|L(v)|>c|V_n|)>0)\}.
\ee

Now note that Proposition \ref{pro:main} implies that the law of the blue edges $B$ is stochastically dominated by a Bernoulli product measure with parameter $p(1-\delta)$, where $p=1-e^{-\beta}$ and $\delta>0$ depends only on $\beta$ and the maximal degree of $G$, but not on anything else like the number of vertices or its spectral gap, etc. In particular it is \emph{uniformly} bounded away from zero when considering any sequence of uniformly bounded degree graphs.

We now restrict ourselves to ``nice" families $(G_n)_n$ for which we know that $p_c((G_n)_n)$ and hence $\beta_c^\text{per}((G_n)_n)$ exist (\cite{ABS04}); in this case it follows from the above discussion:
\begin{cor}
	Let $(G_n)_n$ be a family of $d$-regular expander graphs with diverging girth. 
	For $u\in (0,1]$ we have
	\be
		\beta_c((G_n)_n,u) > \beta_c^\text{per}((G_n)_n).
	\ee
\end{cor}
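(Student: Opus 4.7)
The plan is to replay the reasoning used for Theorem \ref{thm:maincor}, exploiting the \emph{uniformity} of the constant $\delta$ produced by Proposition \ref{pro:main}. That proposition is stated for countably infinite graphs of bounded degree, but its proof is entirely local: it only uses the spatial Markov property of Lemma \ref{lem:Markov} together with the degree bound $\Delta$ and the time horizon $\beta$. In particular $\delta = \delta(\beta, \Delta)$ depends on nothing else. Hence Proposition \ref{pro:main} applies verbatim with $G$ replaced by each finite $G_n$ (so $\Delta = d$) and $G' = G_n$, producing one and the same $\delta > 0$ for every $n$. If one wants to stay strictly within the hypotheses as stated, one may alternatively embed each $G_n$ as an induced subgraph of a $d$-regular infinite graph and apply the proposition there with $G' = G_n$.

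The first step is to fix some $\beta$ slightly above $\beta_c^\text{per}((G_n)_n)$ and set $p = 1 - e^{-\beta}$, so that $p > p_c((G_n)_n)$. By Proposition \ref{pro:main} combined with a standard monotone coupling on the opened edges, the blue edge process $B$ on $G_n$ is stochastically dominated by a Bernoulli bond percolation process with parameter $p(1-\delta)$. This is the thinning argument already used for Theorem \ref{thm:maincor}: conditional on the opened edges $S$, the red edges $R$ dominate an independent Bernoulli($\delta$) thinning $\tilde R$ on the opened set, and then $B_e = S_e(1-R_e) \leq S_e(1-\tilde R_e)$, which globally has law Bernoulli($p(1-\delta)$).

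Next, since $\delta$ is uniform in $n$ and bounded away from $0$ on any compact neighbourhood of $\beta_c^\text{per}((G_n)_n)$, while $p - p_c((G_n)_n) \to 0$ as $\beta \downarrow \beta_c^\text{per}((G_n)_n)$, we can choose $\beta$ so that $p(1-\delta) < p_c((G_n)_n) < p$. By the very definition of $p_c((G_n)_n)$, for this $\beta$ and every $c > 0$, $\pi_{G_n, p(1-\delta)}(|C(v)| > c |V_n|) \to 0$ as $n \to \infty$. Stochastic domination transfers this absence of macroscopic components to $B$, and the inclusion $L(v) \subseteq C_B(v)$ from \eqref{eq:inclusion} transfers it further to the loops: $\P_{G_n, \beta, u}(|L(v)| > c|V_n|) \to 0$ for every $c > 0$. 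On the other hand $\lim_n \pi_{G_n, p}(|C(v)| > c|V_n|) > 0$ for some $c > 0$ since $p > p_c((G_n)_n)$. Thus $\beta_c^\text{per}((G_n)_n) < \beta \leq \beta_c((G_n)_n, u)$.

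The only real obstacle is verifying the uniformity of $\delta$ across the sequence $(G_n)_n$; this is precisely what is highlighted in the paragraph preceding the corollary, and it follows by inspection of the proof of Proposition \ref{pro:main}. Once this is in hand, the rest is a routine transfer of the infinite-graph reasoning to the macroscopic-cluster framework tailored to $(G_n)_n$, with the expander-plus-diverging-girth hypothesis entering only through the existence of $p_c((G_n)_n)$ itself (as justified in \cite{ABS04}).
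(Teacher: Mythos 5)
Your proposal is correct and follows essentially the same route as the paper: the paper's own justification of this corollary is exactly the observation that Proposition \ref{pro:main} yields a $\delta$ depending only on $\beta$ and the degree bound (hence uniform over the sequence $(G_n)_n$), so the blue process is dominated by Bernoulli bond percolation with parameter $p(1-\delta)$, combined with the inclusion \eqref{eq:inclusion} and the existence of $p_c((G_n)_n)$ for expanders with diverging girth via \cite{ABS04}. The only nuance worth keeping in mind is that, since $\beta_c((G_n)_n,u)$ is defined as an infimum and no monotonicity in $\beta$ is invoked, the conclusion $\beta\leq\beta_c((G_n)_n,u)$ should be read as applying the same domination argument to every $\beta'\leq\beta$ (using your uniform lower bound on $\delta$ over a compact interval of $\beta$-values), which is exactly the level of detail the paper itself leaves implicit.
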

In particular one observes macroscopic percolation clusters strictly before observing macroscopic loops. Depending on one's background this might not be obvious a priori since amongst bounded degree graphs expander graphs tend to behave most like the complete graph. It does not come as a surprise, however, when recalling \cite{Ham13sharp} for $u=1$ since (locally) expander graphs behave like regular trees.

\subsection{Sequences of graphs of diverging degree}
Consider, as before, sequences of finite connected graphs $(G_n)_n = (V_n,E_n)_n$; for example one might think of sequences of complete graphs or hypercubes. In order not to get unnecessarily technical, let us restrict ourselves to $G_n$ being vertex transitive for all $n$. Assume the vertex degree $\Delta_n$ of $G_n$ diverges as $n\to\infty$. 
If we chose the same definition of $p_c$ and $\beta_c$ as in Subsection \ref{subsec:expanders}, then we would get $p_c=\beta_c=0$. This prompts us to look at finer scales; it turns out that $\Theta(\Delta_n^{-1})$ is the correct one for our purposes. Instead of saying that $p_c$ and $\beta_c$ take a certain value for a sequence of graphs, one usually writes $p_c=C/\Delta_n$, so the critical parameter depends on $n$. For sequences of complete graphs, i.e. $G_n$ being the complete graph on $n+1$ vertices, one recovers the Erd\H{o}s-R\'enyi model for which it is well-known that $p_c=1/\Delta_n=1/n$.

More formally, we define $p_c,\beta_c$ to be the formal expressions 
\be
	p_c((G_n)_n)\deq \frac{t_p}{\Delta_n}
	\quad\text{and}\quad 
	\beta_c((G_n)_n,u)\deq \frac{t_\beta}{\Delta_n},
\ee
where (for a fixed sequence $(G_n)_n$ and fixed $u$) we define
\begin{align}
	t_p &\deq \inf\{t:\exists c>0:\lim_n\pi_{G_n,p=t/\Delta_n}(|C(v)|>c|V_n|)>0\},\\
	t_\beta &\deq\inf\{t:\exists c>0:\lim_n\P_{G_n,\beta=t/\Delta_n,u}(|L(v)|>c|V_n|>0)\},
\end{align}
whenever the limits are well-defined. Just as before define $\beta_c^\text{per}((G_n)_n)\deq -\ln(1-p_c((G_n)_n))$. We can now state the conjecture complementing our main result, Theorem \ref{thm:maincor}. This addresses the question what is expected to happen if the assumption of having uniformly bounded vertex degree is dropped:
\begin{conj}\label{conj:infinitecase}
	Consider a sequence of finite, connected, vertex transitive\footnote{We do not expect this to be a necessary requirement, but it allows us to easily make sense of ``diverging vertex degree". While this conjecture might extend to non-vertex transitive graphs where the vertex degree for all vertices is bounded from below by some diverging sequence $a_n\to\infty$, we do not make any claims about more pathological graphs, where e.g. only half the vertices are of bounded degree and the other half diverges quickly.} graphs $(G_n)_n$ with diverging vertex degree $\Delta_n$ such that $p_c((G_n)_n)$ and $\beta_c((G_n)_n,u)$ exist. Then for $u\in[0,1]$ we expect
	\be\beta_c((G_n)_n,u) = \beta_c^\text{per}((G_n)_n).\ee
\end{conj}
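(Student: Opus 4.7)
The trivial inequality $\beta_c((G_n)_n,u) \geq \beta_c^\text{per}((G_n)_n)$ follows from the same coupling argument as in the introduction (loops are contained in percolation clusters), so the content of the conjecture is the reverse inequality. The plan is to establish this upper bound via a local weak convergence argument combined with the tree results of \cite{Angel03, BU18loops, HH18}, where the equality $\beta_c = \beta_c^\text{per}$ is already known.

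First I would restrict to sequences $(G_n)_n$ for which the local (Benjamini--Schramm) limit $(\mathcal{G},o)$ is tractable --- typically a regular tree in the case of expander-type sequences with diverging girth, or a Galton--Watson-like tree otherwise. In the diverging-degree regime, the usual rescaling is to couple $\beta = t/\Delta_n$ with a corresponding branching structure; passing to the limit one obtains a Poisson Galton--Watson tree or the PWIT, where the loop model has been analysed explicitly. Then for any $t > t_p$, one has an infinite cluster on the limit object with positive probability, and by the tree results an infinite loop also appears with positive probability at the same threshold.

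The second step is to transfer this loop statement from the limit $\mathcal{G}$ back to $G_n$. For percolation, the fact that infinite clusters on $\mathcal{G}$ translate into macroscopic clusters on $G_n$ is known for sufficiently nice sequences (e.g.\ \cite{ABS04} for transitive expanders with diverging girth, and \cite[Conjecture 1.2]{BBLR12} more generally). The analogous step for loops should be approached by showing that an exploration of the loop through the root has a positive \emph{local} drift: the probability that the loop, after visiting $k$ vertices, visits a new vertex converges as $n\to\infty$ to the corresponding quantity on $\mathcal{G}$, and on $\mathcal{G}$ this quantity is bounded away from $0$ above the tree threshold. Summing this drift over $k = O(|V_n|)$ yields a macroscopic loop with positive probability.

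The main obstacle, and the reason this is only a conjecture, is precisely this transfer step: unlike percolation clusters, loops carry a time coordinate and a global topological constraint (they must close up). A loop that locally looks like it is escaping to infinity on $\mathcal{G}$ could, on $G_n$, close back after visiting $o(|V_n|)$ vertices through a long cycle that is invisible at any bounded radius. Overcoming this requires either (i) a robust second-moment / sprinkling argument producing a giant loop directly in $G_n$, analogous to Schramm's cyclic-time-shift and exchangeability argument \cite{Schramm} but not relying on full vertex-transitivity of the complete graph, or (ii) a monotonicity result ruling out the formation of many medium-sized loops at the expense of a giant one. Neither tool is currently available in the generality of the conjecture, and designing one (likely in the spirit of \cite{BKLM18} for $u\in[0,1]$) would be the crux of a proof.
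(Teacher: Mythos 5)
The statement you are trying to prove is Conjecture \ref{conj:infinitecase}: the paper offers no proof of it, only the heuristic evidence mentioned in Section 2 (the known results for the complete graph \cite{Schramm}, the hypercube \cite{KMU16} and the Hamming graph \cite{MilSeng16} at $u=1$), so there is nothing in the paper to compare your argument against line by line. Judged on its own terms, your proposal is a strategy sketch rather than a proof, and you say so yourself: the transfer of an ``infinite loop on the local limit'' to a macroscopic loop on $G_n$ is exactly the missing ingredient, and the ``positive local drift summed over $k=O(|V_n|)$'' step is not an argument --- loops must close up, and ruling out closure after $o(|V_n|)$ vertices is precisely what makes \cite{Schramm,KMU16,MilSeng16,BKLM18} nontrivial. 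So the conjecture remains a conjecture after your proposal.

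Beyond the admitted gap, one step of your sketch is actually wrong. You claim that on the tree limit ``the equality $\beta_c=\beta_c^\text{per}$ is already known'' from \cite{Angel03,BU18loops,HH18}. It is not: on $d$-regular trees of fixed degree those works give only agreement of $\beta_c$ and $\beta_c^\text{per}$ to first order in $d^{-1}$, and for $u=1$ Hammond's result \cite{Ham13sharp} shows the inequality is \emph{strict} --- which is consistent with (indeed, the model case for) Theorem \ref{thm:maincor} of this paper. So an infinite percolation cluster on the limit tree does \emph{not} imply an infinite loop at the same $\beta$; the conjectured equality is a statement about the $\Delta_n\to\infty$ rescaling $\beta=t/\Delta_n$, where the local limit is no longer a fixed bounded-degree tree and the relevant object (a PWIT-type limit with the loop model on it) has not been analysed in the required generality. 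Any proof along your lines would first have to identify that rescaled limit, show the loop and percolation thresholds on it coincide (which is genuinely a diverging-degree phenomenon, not a consequence of the fixed-degree tree results), and then supply the macroscopic transfer step, e.g.\ by a second-moment or sprinkling argument in the spirit of \cite{Schramm,BKLM18}; none of these three steps is available at the moment, and the case $u=0$ included in the conjecture is even further out of reach, as Subsection \ref{subsec:u=0} explains.
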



\subsection{Weighing configurations by $\theta^{\#\text{loops}}$}
One natural generalisation is a weighted version of the loop model presented in the introduction. T\'{o}th showed that a variant of this model, where permutations receive the weight $\theta^{\#\text{cycles}}$ ($\theta=2$), is closely related to the quantum Heisenberg ferromagnet \cite{Toth93}. Another loop model was introduced by Aizenman and Nachtergaele to describe spin correlations of the quantum Heisenberg antiferromagnet \cite{AN94}. These loop models were combined in order to describe a family of quantum systems that interpolate between the two Heisenberg models, and which contains the quantum XY model \cite{Ue13}. In order to represent a quantum model, one should choose the weight $\theta = 2, 3, 4, \dots$; quantum correlations are then given in terms of loop correlations, and magnetic long-range order is equivalent to the presence of macroscopic loops. Notice that the parameter $\beta$ plays the role of the inverse temperature of the quantum spin system, hence the notation.

More specifically, instead of $\P_{\beta,u}$ consider $\P_{\beta,u,\theta}$, which is characterised by its Radon-Nikodym derivative with respect to $\P_{\beta,u}$ as follows: 
\be
\frac{d\P_{\beta,u,\theta}}{d\P_{\beta,u}}(X) = \frac{\theta^{\ell(X)}}{Z_{\beta,u,\theta}}.
\ee
Here $Z_{\beta,u,\theta}$ is the appropriate normalisation and $\ell(X)$ denotes the number of loops in the configuration $X$. A priori this is only well-defined on finite $G$.

Note that for $\theta\neq 1$ we introduce some global dependencies in the sense that, even though the reference measure $\P_{\beta,u}$ was a product measure on edges $e\in E$, the configurations $X_e$ and $X_{e'}$ are not independent under $\P_{\beta,u,\theta}$ anymore. This creates additional complications which have been addressed in recent works such as \cite{B15largecycles, B16weighted,AlonKozma18,BFU18} on the complete graph, \cite{BU18,BEL18} on trees and \cite{AKM18} on the Hamming graph.

What process is the loop percolation process naturally compared to? By the lack of independence of configurations on different edges, Bernoulli percolation gives a crude bound, but it easy to convince oneself that this is not sharp. In \cite{BBBUe15} it is briefly discussed why the random cluster model, closely related to the Potts and Ising model, is a potential candidate for a natural comparison giving sharp bounds in some cases and numerical estimates are given.

We conclude this subsection by noting that our results (Proposition \ref{pro:main}) are robust enough to carry over to the $\theta > 1$ case in an appropriate sense, but we lack another model (like percolation for $\theta=1$) to naturally compare it to; hence there is no obvious meaningful generalisation of Theorem \ref{thm:maincor} to this physically interesting regime.

\subsection{The $u=0$ case}\label{subsec:u=0} Numerics \cite{BBBUe15} suggest that the result should be true in this case as well. However, a few new ideas seem to be needed since two subsequent double bars do not cancel each other out. More generally, it is not hard to see that there cannot be any other analogue\footnote{More precisely we mean any function $H:X\to\{0,1\}^E$ with (a) $H_e(X)=1$ only for $X$ such that there is a non-empty configuration $X_e$ on $e$, but in terms of loops it would not make a difference if we replaced $X_e$ by the empty configuration and (b) $H=(H_e)_{e\in E}$, as a stochastic process, can be bounded (stochastically) from below by some Bernoulli percolation measure $\pi_\delta$ of strictly positive parameter $\delta>0$.} of $R_e$ satisfying \eqref{eq:unifPos} on finite graphs: For any configuration enumerate all the vertices of $G$ in an arbitrary way, pick the first one, say $x_1$, and mark it with a $+$; follow the loop emanating from $(x_1,0)$ in the upward direction and mark every vertex it visits at time $0$ with a $+$ or $-$, depending on whether we traverse it in upward or downward direction, respectively; remove marked vertices from the list, pick the next one from the list and continue this process until no vertex is left in the list. It is immediate that unless all vertices are marked with $+$'s the configuration cannot correspond to the identity permutation on the vertices of $G$, which would be obtained if said analogue of $R_e$ was equal to $1$ for all $e\in E$. Noting that there is no configuration with at least one double bar for which the above algorithm does not assign a ``$-$" to at least one vertex (hence it cannot be equal to the configuration where no double bars have been added) concludes the argument.

It appears that this is not merely a technicality. This is because even after restricting our attention to infinite graphs, there does not seem to be an obvious analogue of $R_e$ to satisfy \eqref{eq:unifPos} (essentially by a quantified version of the above argument when conditioning on $X_e$ on some boundary).

\section{Acknowledgements}
I thank Daniel Ueltschi and Eleanor Archer for many useful discussions.

\bibliography{references}{}
\bibliographystyle{alpha}

\end{document}